\title{Avoiding spectral pollution for transfer operators using residuals}
\author{
April~Herwig\footnote[1]{Technical University Munich \email{april.herwig@tum.de}}
\and Matthew~J.~Colbrook\footnote[2]{Cambridge University \email{m.colbrook@damtp.cam.ac.uk}}
\and Oliver~Junge\footnotemark[1]{}
\and Péter~Koltai\footnote[3]{Universität Bayreuth}
\and Julia~Slipantschuk\footnotemark[3]{}
}
\date{}
\begin{document}

\maketitle

\begin{abstract}
Koopman operator theory enables linear analysis of nonlinear dynamical systems by lifting their evolution to infinite-dimensional function spaces. However, finite-dimensional approximations of Koopman and transfer (Frobenius--Perron) operators are prone to spectral pollution, introducing spurious eigenvalues that can compromise spectral computations. While recent advances have yielded provably convergent methods for Koopman operators, analogous tools for general transfer operators remain limited. In this paper, we present algorithms for computing spectral properties of transfer operators without spectral pollution, including extensions to the Hardy--Hilbert space. Case studies—ranging from families of Blaschke maps with known spectrum to a molecular dynamics model of protein folding—demonstrate the accuracy and flexibility of our approach. Notably, we demonstrate that spectral features can arise even when the corresponding eigenfunctions lie outside the chosen space, highlighting the functional-analytic subtleties in defining the ``true'' Koopman spectrum. Our methods offer robust tools for spectral estimation across a broad range of applications. 
\end{abstract}

\begin{keywords}
    dynamical systems, Koopman operator, Frobenius--Perron operator,
    transfer operator, data-driven discovery, dynamic mode decomposition, spectral
    theory
\end{keywords}
\begin{MSCcodes}
    46E22, 46F10, 46N40, 47L50, 37A30, 37M10, 65P99, 65F99, 65T99
\end{MSCcodes}

\section{Introduction}

We consider a dynamical system with state $x$ in a state space $\Omega \subset
\bbR^d$ that evolves according to
$$\setlength\abovedisplayskip{6pt}\setlength\belowdisplayskip{6pt}
    x_{n + 1} = S (x_n),\quad n = 0, 1, 2, \ldots,
$$
where $S : \Omega \to \Omega$ is a (typically nonlinear) map, and $n$ indexes discrete time. Koopman \cite{Koopman31,KoopmanOG} introduced a linear operator that evolves complex-valued functions—called observables—forward in time:
$$\setlength\abovedisplayskip{6pt}\setlength\belowdisplayskip{6pt}
    [\scrK g](x) = [g \circ S](x)=g(S(x)),\quad g \in L^2 (\Omega),\ x \in \Omega. 
$$
This operator acts by composition with the map of the dynamical system, providing a linear ``lifting'' of the nonlinear dynamics. Instead of analyzing the nonlinear evolution of the state $x$, one can study the linear evolution of observables $g\in L^2$ through $g \mapsto g \circ S$. The adjoint $\scrL=\scrK^*$ is known as the \emph{transfer} or \emph{Frobenius--Perron} operator. Koopman theory typically centers on the analysis of the spectral properties of $\scrK$ or $\scrL$, such as eigenvalues, eigenfunctions, spectra, and spectral measures. 

The Koopman operator formalism has experienced a modern resurgence as a key tool in data-driven dynamical systems analysis \cite{mezic2005spectral,Budi_2012,koopmanreview,DMDmultiverse}. In this context, one assumes that the map $S$ is unknown and seeks to study $\scrK$ or $\scrL$ using trajectory data from the system. In applied settings, the Koopman operator has been utilized in fluid mechanics~\cite{fluid1,fluid2}, oceanography~\cite{ocean1,ocean2}, molecular dynamics~\cite{molecule1,molecule2,schutte2013metastability}, among other fields.

For example, suppose an observable $g\in L^2$ (which may represent a sensor state such as voltage, fluid velocity, or simply the full state $g(x) = x$) can be 
written as $g = \sum_j c_j \psi_j$, where the $\psi_j$ are eigenfunctions associated with eigenvalues $\lambda_j$ of $\scrK$. (In general, one must also account for contributions from the continuous spectrum.) Then $g$ evolves as 
$$\setlength\abovedisplayskip{6pt}\setlength\belowdisplayskip{6pt}
    g (x_{n}) = \sum_{j} 
    \lambda_j^n \, c_j \psi_j (x_0) . 
$$
By approximating this decomposition, the Koopman operator framework can provide a simplified model capturing, for example, the macroscopic dynamics of the system.

Spectral decompositions of $\scrK$ and $\scrL$ can reveal valuable dynamical information. For example, the sign structure of eigenfunctions determines almost-invariant or metastable sets \cite{attr,schutte2013metastability}; level sets of eigenfunctions encode isostables for fixed points \cite{MAUROY2013,mauroy2016}; and eigenfunctions can reveal invariant manifolds \cite{mezicapplications} and ergodic partitions \cite{Budi_2012,mezicergodicpartition}. Finite approximate eigendecompositions are used to identify unknown dynamics \cite{KLUS2013} and to perform model reduction~\cite{bieker2020,koopmanreview}. Moreover, the dominant singular value spectrum of the Koopman and Frobenius--Perron operators encodes prevalent non-stationary dynamical features, such as coherent sets~\cite{froyland2010transport,froyland2013analytic,KWNS18,wu2020variational}.

\paragraph{Dynamic Mode Decomposition}
A central challenge is that the Koopman operator is infinite-dimensional, making its spectra and modes analytically intractable except in simple cases. This has motivated a wide range of data-driven algorithms for approximating Koopman spectral quantities from simulation or measurement data.

A widely used method is \emph{Dynamic Mode Decomposition} (DMD), originally developed in fluid mechanics \cite{SchSe08,schmid2010dynamic} and later recognized as approximating the Koopman operator's eigenvalues and modes \cite{rowley2009spectral}. Given time-series data, DMD seeks a best-fit linear evolution of the snapshots, producing modes and associated eigenvalues (reflecting growth or decay rates) that approximate the system's dynamics. The basic DMD framework has since evolved into numerous variants~\cite{jovanovic2014sparsity,edmd,H_Tu_2014,KlKoSch16,colbrook2021rigorousKoop,colbrook2025rigged,boulle2024multiplicative,resdmd,kerneledmd,mpedmd,Williams_2015,Giannakis_2024,valva2024}; see~\cite{DMDmultiverse} for a recent review. An alternative data-driven method for approximating $\scrL$ was proposed in~\cite{entropic}, based on ideas from optimal transport and yielding a kernel-type scheme.

Nevertheless, nearly all DMD variants suffer from three primary shortcomings:
\begin{itemize}
    \item\textit{Large data requirements:} The Koopman operator is typically approximated from finite trajectory data, requiring extensive sampling to capture the dynamics across the state space, which may also suffer from the curse of dimensionality.
    \item\textit{Choice of observables:} The set of observables (the ``dictionary'') used to discretize the Koopman operator should ensure sufficient expressivity. However, selecting a suitable dictionary is often challenging, especially in high-dimensional settings~\cite{Li2017}. To address this, machine learning methods for constructing observables have been proposed~\cite{autoencodernetworks,neuralnetedmd,Bollt_2021,Alford_Lago_2022}, and manifold learning has also been explored~\cite{BeGiHa15,KoWe20}.
    \item\textit{Spectral pollution and spectral invisibility:}
Finite-dimensional approximations of the infinite-dimensional Koopman operator may suffer from \textit{spectral pollution}, where spurious eigenvalues do not correspond to the true spectrum, or \textit{spectral invisibility}, where genuine spectral regions are entirely missed. These issues are especially pronounced when the Koopman spectrum contains continuous components or mixtures of discrete and continuous spectra. For explicit examples and further discussion, see~\cite{DMDmultiverse,colbrook2024limits}, \cite[Section 2.6]{blank2002ruelle}, \cite[Example 2]{mezic2020numerical}, and references therein.
\end{itemize}

Recently, the first two shortcomings have been rigorously proven to be
fundamentally inherent to Koopman spectral analysis, regardless of the algorithm
employed (DMD or otherwise) \cite{colbrook2024limits}. In particular, there are intrinsic barriers to what can be computed, even with unlimited trajectory data. The third shortcoming has been effectively addressed for
$\mathcal{K}$ acting on $L^2$ spaces by the \textit{Residual DMD} (ResDMD)
algorithm\cite{colbrook2021rigorousKoop} (see also \cite{boulle2025convergent} for reproducing kernel Hilbert spaces), which yields provably convergent and practical methods for computing Koopman spectral properties.

In this paper, we explore methods to tackle the third shortcoming for the adjoint
(Frobenius--Perron) operator $\scrL$ on $L^2$, as well as for $\scrK$ acting on spaces other than $L^2$.


\begin{figure}[t]
    \centering
    \begin{subfigure}{0.49\textwidth}
        \centering
        \includegraphics[width=\linewidth,trim={0 0 0 1cm},clip]{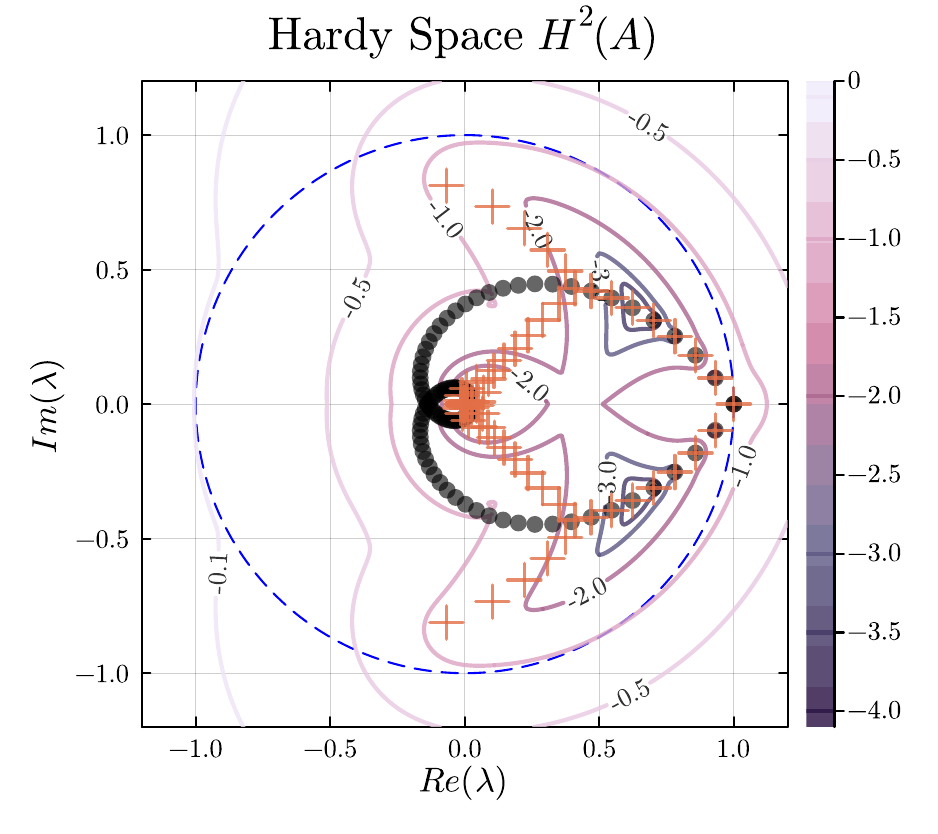} 
    \end{subfigure}
    \hfill
    \begin{subfigure}{0.49\textwidth}
        \centering
        \includegraphics[width=\linewidth,trim={0 0 0 1cm},clip]{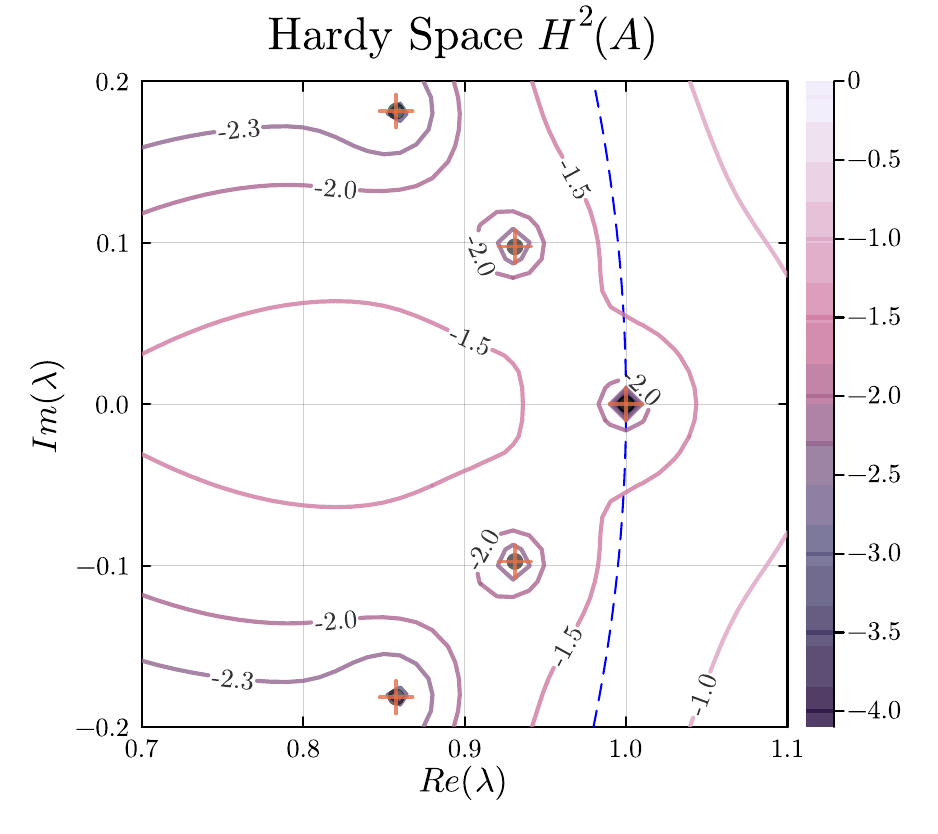} 
    \end{subfigure}
    \caption{Detection of \textit{spectral pollution} using the proposed methods.
The right subfigure is a zoomed-in view of the left for emphasis.
Black dots: true eigenvalues of the Koopman operator for a mixing dynamical system (in a space larger than $L^2$); see \cref{sec:another_blaschke}.
Orange crosses: EDMD approximation of selected leading eigenvalues in the bespoke space. Contour lines approximate boundaries of $\epsilon$-pseudospectra (logarithmic scale), with darker regions indicating higher approximation accuracy and lighter regions indicating possible spectral pollution.}
    \label{fig:blaschke_contour}
\end{figure}

\paragraph{Contributions of this Paper}
First, we develop a method to quantify and control the shortcomings of DMD methods for transfer operators, starting with the widely used kernel EDMD variant~\cite{kerneledmd}. Rather than introducing yet another DMD variant, our approach offers a simple and practical extension compatible with many existing methods, enabling robust error control via the computation of an auxiliary matrix.

Second, we critically examine the subtle interplay between the choice of function space and the spectrum of the Koopman operator—an issue previously highlighted in, e.g.,\cite{dmdanalytic, edmdexpanding, blaschkemaps2024, wormell,froyland2014}. Our analysis focuses on an illustrative example where the Koopman spectrum is known analytically \cite{Slipantschuk}, yet resides in a space strictly larger than $L^2$. We show that numerical approximations of $\scrK$ are influenced by these eigenvalues, even though the corresponding eigenfunctions do not lie in $L^2$. The error control built into our algorithm enables detection and mitigation of this phenomenon, providing a practical diagnostic tool—see \cref{fig:blaschke_contour} for a brief graphical illustration.

We also present computations in function spaces beyond $L^2$, underscoring the broader need for robust numerical methods applicable to both Koopman and Frobenius--Perron operators. This study highlights a key issue of the ``true'' Koopman spectrum: spectral values can influence finite-dimensional approximations even when their eigenfunctions lie outside the chosen observable space. Finally, we validate the proposed methods using real-world datasets, including examples from protein folding dynamics.

\paragraph{Spectral Theory and Notation}
Let $\scrQ: \scrX\to\scrX$ be a bounded linear
operator on a reflexive\footnote{$\scrX^{**}$ can be identified with $\scrX$in
the usual manner. } Banach space of complex-valued functions. The adjoint\footnote{The dual space $\scrX^*$ is the space of
bounded antilinear functionals from $\scrX$ to $\bbC$. } $\scrQ^* : \scrX^* \to
\scrX^*$ satisfies $\scrQ^{**} = (\scrQ^*)^* = \scrQ$.
The spectrum of $\scrQ$ admits a disjoint decomposition
\begin{equation}\setlength\abovedisplayskip{6pt}\setlength\belowdisplayskip{6pt}
    \sigma (\scrQ) = \sigma_p (\scrQ) \uplus \sigma_c (\scrQ) \uplus \sigma_r (\scrQ)
\end{equation}
into point, continuous, and residual spectrum\footnote{
    The spectrum $\sigma
    (\scrQ)$ of $\scrQ$ is the set of complex numbers $\lambda$ for which the
    operator $\scrQ-\lambda I$ does not have a bounded inverse.
    The \emph{point spectrum}
    $\sigma_p (\scrQ)$ contains those $\lambda \in \bbC$ for which $\scrQ - \lambda
    I$ is not injective, so that the kernel of $\scrQ-\lambda I$ is nontrivial,
    i.e., there exists an eigenvector at the eigenvalue $\lambda$. The
    \emph{continuous spectrum} $\sigma_c (\scrQ)$ contains those $\lambda$ for which
    $\scrQ - \lambda I$ is injective, not surjective, but its range is still a dense subset of~$\scrX$. Finally, the \emph{residual spectrum} $\sigma_r (\scrQ)$
    contains those $\lambda$ for which $\scrQ - \lambda I$ is injective, not
    surjective and its range is not dense. 
}. We list some basic facts about the
decomposition of the spectrum~\cite{simonoperatortheory,lax}: $\ (1)$ $\lambda \in \sigma (\scrQ)$ if and only
if $\bar{\lambda} \in \sigma (\scrQ^*)$; $\ (2)$ if $\lambda \in \sigma_r
(\scrQ)$, then $\bar{\lambda} \in \sigma_p (\scrQ^*)$; $\ (3)$ conversely, if
$\lambda \in \sigma_p (\scrQ)$, then $\bar{\lambda} \in \sigma_p (\scrQ^*) \cup
\sigma_r (\scrQ^*)$; $\ (4)$ $\sigma_c (\scrQ) = \sigma_c (\scrQ^*)$. We will
always use $\langle \cdot, \cdot \rangle$ to denote the $L^2$ inner product, and
$\left\langle \cdot \mid \cdot \right\rangle$ for the duality pairing between
$\scrX$ and $\scrX^*$. To ensure clarity, norms will always have a subscript
denoting the space they are referencing, while $\| \cdot \|_{op}$ denotes the
operator norm.

\section{Data-driven Approximations of Operators}

\subsection{Petrov--Galerkin Methods}

We begin with the general Petrov--Galerkin framework for approximating operators~\cite{petrovgalerkin,petrovgalerkinbook}. Let
$$\setlength\abovedisplayskip{6pt}\setlength\belowdisplayskip{6pt}
\overline{ \scrX } = \spn \left\{ \psi_1, \ldots, \psi_N \right\} \subset \scrX, \quad 
\overline{ \scrX^* } = \spn \left\{ \phi_1, \ldots, \phi_M \right\} \subset \scrX^* 
$$
be finite-dimensional trial and test spaces. Define quasi-matrices
$$\setlength\abovedisplayskip{6pt}\setlength\belowdisplayskip{6pt}
\Psi : \bbC^N \to \scrX,\quad
c \mapsto \Psi c = \sum_{j = 1}^{N} c_j \psi_j,
\qquad 
\Phi : \bbC^M \to \scrX^*\quad\text{(analogously).}
$$
We solve the minimization problem
\begin{equation}\setlength\abovedisplayskip{6pt}\setlength\belowdisplayskip{6pt}
    \label{eq:galerkin}
    \left\| Q U - R \right\|_F^2 = \min_{U \in \bbC^{N \times N}} !, \quad 
    Q_{i j} = \left\langle \varphi_i \mid \psi_j \right\rangle , \quad
    R_{i j} = \left\langle \varphi_i \mid \scrQ \psi_j \right\rangle , 
\end{equation}
where $\left\| \cdot \right\|_F$ denotes the matrix Frobenius norm. Here,
$Q$ is the mass (or Gram) matrix, and $R$ is the stiffness 
matrix. When $\scrX$ is a function space, $\Psi$ may also be viewed as a row-vector-valued function
$$\setlength\abovedisplayskip{6pt}\setlength\belowdisplayskip{6pt}
\Omega \ni x \mapsto \Psi (x) = [\, \psi_1 (x) \mid \ldots \mid \psi_N (x) \,],
$$
and similarly for $\Phi$. We may also consider the infinite-dimensional case $N = \infty$, where $\Psi$ and $\Phi$ act on $\ell^2$. A solution to \cref{eq:galerkin} is given by the Moore--Penrose pseudoinverse: $U = Q^\dagger R = ( Q^* Q )^{-1} Q^* R$.

\subsection{Extended Dynamic Mode Decomposition (EDMD)}\label{sec:edmd}

We now apply the Petrov--Galerkin scheme to approximate the Koopman operator and present two perspectives on EDMD~\cite{edmd}: the original formulation and the infinite-data limit as a Galerkin problem.

\paragraph{1. The Petrov--Galerkin Ansatz}
Let $\{\, x_i,\, S (x_i) \, \}_{i = 1}^M$ be a set of ``snapshots'' of the
dynamical system. In the EDMD context, the basis $\Psi$ of $\scrX$ is referred
to as the ``dictionary''. For $\Phi$, take delta distributions $\varphi_i =
\delta_{x_i}$, so that  $\left\langle \varphi_i \mid \psi_j \right\rangle =
\left\langle \delta_{x_i} \mid \psi_j \right\rangle = \psi_j (x_i)$. Then solve the problem \cref{eq:galerkin} for $y_j = \scrK \psi_j$: 
\begin{equation}\setlength\abovedisplayskip{6pt}\setlength\belowdisplayskip{6pt}
    \label{eq:edmd}
    \left\| \YX K - \YY \right\|_F^2 = \min_{K \in \bbC^{N \times N}} !\,,
\end{equation}
where $(\YX)_{i j} = \left\langle \delta_{x_i} \mid \psi_j \right\rangle =
\psi_j (x_i)$ and $(\YY)_{i j} = \left\langle \delta_{x_i} \mid \scrK \psi_j
\right\rangle = \psi_j (\, S (x_i) \,)$. This results in the \emph{EDMD matrix}
$K = \YX^\dagger \YY = \left( \YX^* \YX \right)^{-1} \YX^* \YY$, which approximates
$\scrK$~\cite{KlKoSch16}.

\paragraph{2. The Infinite Data Limit}
Provided the snapshots are suitably sampled, define $G = \frac{1}{M} \YX^*
\YX$ and $A = \frac{1}{M} \YX^* \YY$. Then
$$\setlength\abovedisplayskip{6pt}\setlength\belowdisplayskip{6pt}
    G_{j l} = \frac{1}{M} \sum_{i = 1}^{M} \overline{\psi_j (x_i)} \psi_l (x_i) 
    \xrightarrow{\makebox[1.2cm]{\scriptsize{$M \to \infty$}}} 
    \left\langle \psi_j, \psi_l \right\rangle , \quad j, l = 1, \ldots, N 
$$
and analogously $\lim_{M \to \infty} A_{j l} = \left\langle \psi_j, \scrK \psi_l
\right\rangle$. This convergence can be formalized using quadrature theory; see~\cite{KlKoSch16,colbrook2021rigorousKoop} and~\cite[Section 4.1.3]{DMDmultiverse}. For example, if the data points $x_i$ are sampled randomly from the state space $\Omega$, then $\frac{1}{M} (
\YX^* \YX )_{j l}$ serves as a Monte Carlo quadrature for $\langle \psi_j, \psi_l
\rangle$ and $\smash{ \frac{1}{M} ( \YX^* \YY )_{j l} }$ for $\left\langle \psi_j, \scrK \psi_l \right\rangle$. 

This offers an alternative route to derive~\cref{eq:edmd}. Taking $\scrX = L^2$
for both the trial and test spaces and $\Psi = \Phi$, the Galerkin problem becomes
\begin{equation}\label{eq:edmd_infinite_data}\setlength\abovedisplayskip{6pt}\setlength\belowdisplayskip{6pt}
    \left\| \bmG K - \bmA \right\|_F^2 = \min_{K \in \bbC^{N \times N}} ! ,\quad
		\bmG_{j l} = \left\langle \psi_j,\, \psi_l \right\rangle ,
		\quad
		\bmA_{j l} = \left\langle \psi_j,\, \scrK \psi_l \right\rangle .
\end{equation}
The matrix $\bmG$ is invertible since by assumption $\left\{ \psi_1, \ldots,
\psi_N \right\}$ are linearly independent.
Since rescaling by
$\frac{1}{M}$ does not change the minimizer, we see that $\smash{K = \YX^\dagger
\YY}$ serves as a quadrature approximation of the infinite-data solution
$K = \bmG^{-1} \bmA$.

It is important to note that the second viewpoint is \emph{ambivalent} to the inner product. As long as
one has an approximation scheme for $\left\langle \psi_i, \psi_j
\right\rangle_\scrX$ with respect to some inner product space $\scrX$, then one
can approximate the mass matrix $\bmG$ and the stiffness matrix $\bmA$ and 
therefore understand \cref{eq:edmd_infinite_data} on the 
arbitrary space $\scrX$. In this way, one bypasses the Petrov--Galerkin Ansatz 
entirely.

\paragraph{EDMD for the Frobenius--Perron Operator} 
Noting the form \cref{eq:edmd_infinite_data} of $\bmA$, we have that 
$$\setlength\abovedisplayskip{6pt}\setlength\belowdisplayskip{6pt}
    \overline{\bmA_{l j}}
    = \overline{ \left\langle \psi_l, \scrK \psi_j \right\rangle } 
    = \left\langle \scrK \psi_j, \psi_l \right\rangle 
    = \left\langle \psi_j, \scrL \psi_l \right\rangle.
$$
We will make significant use of $\scrL$, so it is useful to note that the above
calculation yields an equivalent Galerkin method for $\scrL$, namely  
$
    L = (\Psi^* \Psi)^{-1} \Psi^* \,\scrL \Psi = \bmG^{-1} \bmA^*, 
$
or the analogue in terms of finite-data matrices: $L = G^{-1} A^*$;
see~\cite{KlKoSch16}.

\subsection{Kernelized EDMD (kEDMD)}\label{sec:kedmd}

As shown later in \cref{sec:blaschke}, an under-ex\-press\-ive dictionary can lead to catastrophic spectral errors. However, the standard formulation $K = (
\YX^* \YX )^{-1} ( \YY^* \YX )$ scales poorly with the dimension of the system's state space. For instance, representing all multivariate polynomials up to degree 6 in $\bbR^{20}$ requires $N \approx 75,000$. The challenge, then, is to increase $N$ efficiently without making the EDMD matrix prohibitively expensive to compute.

The kernel trick~\cite{kernel_OG} is a widely used technique in machine learning~\cite{campbell2001,hofmann2008,muller2018}. Kernelized EDMD~\cite{kerneledmd}, shown in \cref{alg:kedmd}, uses this idea to make EDMD practical when $N$ is large or even infinite. It relies on a (reduced) singular value decomposition (SVD)
$$\setlength\abovedisplayskip{6pt}\setlength\belowdisplayskip{6pt}
    \tfrac{1}{\sqrt{M}} \YX = Q \Sigma Z^*, \quad 
    Q \in \bbC^{M \times r},\ \Sigma \in \bbC^{r \times r},\ Z \in \bbC^{N \times r}
$$
where $r = \mathrm{rank} (\YX)$, $\Sigma$ is a positive diagonal matrix
and $Q$, $Z$ are isometries. 
In practice, a reduced rank $r \le \mathrm{rank} (\YX)$ is chosen in advance for compression. This SVD is used in~\cite{kerneledmd} to define the \emph{kernel EDMD matrix}
\begin{equation}\setlength\abovedisplayskip{6pt}\setlength\belowdisplayskip{6pt}
    \label{eq:K_hat}
    \widehat{ K } = Z^* K Z = 
    \frac{1}{M} \left( \Sigma^\dagger Q^* \right)
    \YY \YX^*
    \left( Q \Sigma^\dagger \right)\ \in\ \bbC^{r \times r} . 
\end{equation}
The matrix $Z^*$ removes the kernel of $\YX$, ensuring that each nonzero eigenpair $(\lambda, v) \in \bbC \times
\bbC^{r}$ of $\widehat{ K }$ corresponds one-to-one with an eigenpair $(\lambda, Z v) \in \bbC \times \bbC^{M \times M}$ of $K$. One then
reverses the order of multiplication in $G = \frac{1}{M} \YX^* \YX$ and $A =
\frac{1}{M} \YX^* \YY$ to define
$$\setlength\abovedisplayskip{6pt}\setlength\belowdisplayskip{6pt}
    \widehat{ G } = \frac{1}{M} \YX \YX^* , \quad 
    \widehat{ A } = \frac{1}{M} \YY \YX^* . 
$$
Now $Q$ and $\Sigma$ can be obtained via an eigendecomposition since $\widehat{
G } = Q \Sigma^2 Q^*$.

\paragraph{Mercer's Theorem}
We can also state these results in terms of Mercer's theorem \cite[Chapter~3,
Theorem~4]{learning}, \cite{mercerstheorem}: any continuous,
symmetric, positive definite function 
$k : \Omega \times \Omega \to \bbC$ defined on a compact space $\Omega \subset \bbC^d$ 
can be decomposed as 
$$\setlength\abovedisplayskip{6pt}\setlength\belowdisplayskip{6pt}
    k(w, z) = \sum_{l = 1}^{\infty} \mu_l \phi_l (z) \, \overline{\mu_l \phi_l (w)}
    = \Psi (z) \Psi (w)^* \quad\quad \forall\, z, w \in \Omega
$$
where $\Psi (x) = [\ \mu_1 \phi_1 (x) \mid \mu_2 \phi_2 (x) \mid \ldots\ ]$.
The multipliers $\mu_l \geq 0$ form a decreasing sequence converging to $0$.
Moreover, $\{ \phi_l \}_l$ forms an orthonormal basis of $L^2$, and $\{ \mu_l
\phi_l \}_l$ is an orthonormal basis for a reproducing kernel Hilbert space
(RKHS) \cite{learning,RKHS}. We refer to the $\mu_l \phi_l$'s as Mercer features and say
that $N = \infty$ when infinitely many $\mu_l > 0$.

Typically, $k (w, z)$ can be evaluated in $O (d)$ operations. Now observe that
$\widehat{ G }_{i l} = \frac{1}{M} \Psi (x_i) \Psi (x_l)^*$ and $\widehat{ A
}_{i l} = \frac{1}{M} \Psi (x_i) \Psi ( S(x_l) )^*$. This means that 
$$\setlength\abovedisplayskip{6pt}\setlength\belowdisplayskip{6pt}
    \widehat{ G }_{i l} = \frac{1}{M} k ( x_l, x_i ), \quad 
    \widehat{ A }_{i l} = \frac{1}{M} k ( S (x_l), x_i ) , 
$$
and so $\widehat{ G }$, $\widehat{ A }$ can be computed in $O ( d M^2 )$
operations, a significant improvement to computing $O (N^2)$ quadrature problems
over $\Omega \subset \bbC^d$, each of which may require e.g.\ $O (M^d)$
operations. Note that $\Psi$ in Mercer's theorem is often infinite, and often
only implicitly known --- its existence is guaranteed, but an explicit form is
not needed. 

\begin{algorithm}[t]
    \caption{Kernel EDMD \cite{kerneledmd}}
    \label{alg:kedmd}
    \begin{algorithmic}[1]
        \Require kernel $k : \Omega \times \Omega \to \bbC$, data points 
            $\{ x_i \}_{i = 1}^M$, compression $r \leq M$.
        \State Construct $\widehat{G} = (\, \frac{1}{M} k ( x_l, x_i ) \,)_{i, l= 1}^M$, 
            $\ \widehat{A} = (\, \frac{1}{M} k ( S(x_l), x_i ) \,)_{i, l = 1}^M$ 
        \State Compute an eigendecomposition $\widehat{G} = Q \Sigma^2 Q^*$ 
        \State Let $\widetilde{\Sigma} = \Sigma [1:r, 1:r]$, $\widetilde{Q}
        = Q [:, 1:r]$ ($r$ largest singular values and vectors) \State Construct
        $\widehat{K} = ( \widetilde{\Sigma}^\dagger \widetilde{Q}^* )
        \widehat{A}
        ( \widetilde{Q} \widetilde{\Sigma}^\dagger )$
        \State Compute an eigendecomposition $\widehat{K} V = V \Lambda$
        \State \Return Eigenvalues and eigenvectors $\Lambda$, $\widetilde{Q} \widetilde{\Sigma} V$
    \end{algorithmic}
\end{algorithm}

\subsection{Validation of Koopman Eigenpairs (ResDMD)}

If $\Pi$ is the orthogonal projection of a Hilbert space $\scrX$ onto $\spn \{ \psi_j \}_{j = 1}^N$ and $\spn \{ \psi_j \}_{j = 1}^\infty$ is dense in $\scrX$, then $\Pi \scrK \Pi$ converges to $\scrK$ in the
strong operator topology as the dictionary sizes goes to infinity. However, it is well known that the spectrum can be highly unstable in the Hausdorff topology, even for operators close in norm~\cite{Pseudospectra}—let alone for those converging only pointwise. In particular, the eigenvalues of $K$ (the matrix representation of $\Pi \scrK
\Pi$) may have little relation to the true spectrum of $\scrK$: entire spectral regions may be missed (spectral invisibility), or persistent spurious eigenvalues may appear (spectral pollution).

A common remedy is \emph{stochastic blurring}. Instead of evolving deterministically via $S$, each point $x \in \Omega$ is assigned a \emph{distribution} of image points, producing a stochastic dynamical system. The resulting Markov process has an associated (stochastic) Frobenius--Perron operator which, under suitable conditions on the noise, is Hilbert--Schmidt on $L^2
(\Omega)$. In this setting, finite-dimensional approximations converge in norm to the compact operator, and so do the eigenvalues~\cite{attr}. In contrast, we adopt an approach based on pseudospectra that allows us to quantify eigenvalue error directly.

\paragraph{Pseudospectra} 
The $\epsilon$-pseudospectrum of a bounded operator $\scrQ$ on a Hilbert space
$\scrX$ is the smallest superset of the spectrum that is
robust to perturbations of size $\epsilon$
\cite{Pseudospectra}:
\begin{equation}\label{eq:pseudospectrum}\setlength\abovedisplayskip{6pt}\setlength\belowdisplayskip{6pt}
    \begin{aligned}
    \sigma_\epsilon (\scrQ) &= \!\!\bigcup_{\| \scrE \|^{}_{op} \leq \epsilon }\!\!\sigma (\scrQ + \scrE)  \\
    &= \left\{ \smash{\lambda \in \bbC }\,\Big\vert\, 
        \smash{\inf_{\| u \|^{}_{\scrX} = 1} \| (\scrQ - \lambda I) u \|_\scrX \leq \epsilon} 
        \text{ or } \smash{\inf_{\| u \|^{}_\scrX = 1} \| (\scrQ^* - \bar{\lambda} I) u \|_\scrX \leq \epsilon} \right\} .  
    \end{aligned}
\end{equation}
It is immediate that $\sigma_{\epsilon_1} (\scrQ) \subset \sigma_{\epsilon_2}
(\scrQ)$ when $\epsilon_1 \leq \epsilon_2$ and that $\sigma (\scrQ) =
\cap_{\epsilon > 0} \sigma_\epsilon (\scrQ)$.

The necessity to consider the adjoint in the above equation leads to the
so-called $\epsilon$-approximate point pseudospectrum: 
$$\setlength\abovedisplayskip{6pt}\setlength\belowdisplayskip{6pt}
    \sigma_{ap, \epsilon} (\scrQ) = \left\{ 
        \lambda \in \bbC \,\Big\vert\, \inf_{\| u \|^{}_{\scrX} = 1} 
        \left\| (\scrQ - \lambda I) u \right\|_\scrX \leq \epsilon 
    \right\} . 
$$
In many cases $\sigma_{ap, \epsilon} (\scrQ)=\sigma_\epsilon (\scrQ)$, in
particular, whenever $\scrQ$ has no residual spectrum. Notable cases include:
matrices / finite-rank operators, compact operators, normal operators. We define the approximate point spectrum as $\sigma_{ap} (\scrQ) = \cap_{\epsilon > 0} \sigma_{ap, \epsilon} (\scrQ)$.

\paragraph{Application to Koopman Operators} 
From the previous observations, $\lambda \in \sigma_{ap} (\scrK)$ if and only if there exists a sequence $u_N \in \scrX$ with $\| u_N \|^{}_{\scrX} = 1$, such
that for any $\epsilon > 0$,  there exists $N$ sufficiently large with 
\begin{equation}\setlength\abovedisplayskip{6pt}\setlength\belowdisplayskip{6pt}
    \label{eq:K_residual}
    \left\| (\scrK - \lambda I) u_N \right\|_\scrX < \epsilon. 
\end{equation}
Such observables $u_N$ are called pseudoeigenfunctions. Like true eigenfunctions, they capture meaningful dynamical behavior: from \cref{eq:K_residual}, $\| \scrK^n u_N - \lambda^n u_N \|^{}_{\scrX} =
O(n \epsilon)$. The function $u_N$ therefore describes a coherent observable in
state space and $\lambda$ encodes (approximate) decay and oscillation of $u_N$
on finite time spans, for as long as $\lambda^n u_N$ dominates the error~$O(n\epsilon)$. Given an eigenpair $(\lambda, c) \in \bbC \times \bbC^N$ of $K$, which we wish to assess as a candidate eigenpair $(\lambda, \Psi c) \in \bbC \times
\scrX$ of $\scrK$, \cref{eq:K_residual} provides a criterion for doing so.

\paragraph{The Residual Function}\label{sec:residual_function}
From \cref{eq:edmd_infinite_data} we see that the \emph{regression error}
\begin{equation}\label{eq:res}\setlength\abovedisplayskip{6pt}\setlength\belowdisplayskip{6pt}
    \res (\lambda, c; M, N) = \frac{1}{\sqrt{M}} \big\| (\YY - \lambda \YX) c \big\|_{\bbC^M}
\end{equation}
is precisely a quadrature approximation of $\left\| (\scrK - \lambda I) \Psi c
\right\|_{L^2}$. We write $\res (\lambda, c)$ whenever $M$ and $N$ can be inferred from the context. It was shown in \cite{resdmd} that for a candidate
eigenvalue $\lambda$ and associated candidate eigenvector $g = \Psi c$ we have
$\lim_{M \to \infty}^{} \res (\lambda, c; M, N)^2 = \smash{\left\| (\scrK -
\lambda I) g \right\|_{L^2}^2}$. This is because letting $J = \frac{1}{M} \YY^*
\YY$ and expanding \cref{eq:res} yields 
\begin{equation}\setlength\abovedisplayskip{6pt}\setlength\belowdisplayskip{6pt}
    \label{eq:res_expanded}
    \begin{split}
        \quad\quad\res (\lambda, c; M, N)^2 
        \ 
        = \ \, & c^* J c\ 
        -\ \bar{\lambda}\ c^* A c\ 
        -\ \lambda\ c^* A^* c\ 
        +\ | \lambda |^2 c^* G c \  \\ 
        \xrightarrow{\makebox[1.2cm]{\scriptsize{$M \to \infty$}}} 
        \ &
        \left\langle \scrK g, \scrK g \right\rangle 
        - \bar{\lambda} \left\langle g, \scrK g \right\rangle 
        - \lambda \left\langle \scrK g, g \right\rangle
        \ +\ | \lambda |^2 \left\langle g, g \right\rangle =\left\| (\scrK - \lambda I) g \right\|_{L^2}^2 . 
    \end{split}
\end{equation}
Hence, if we consider the minimum of such $g$ over the unit ball in $\overline{
\scrX } = \spn \left\{ \psi_1, \ldots, \psi_N \right\}$,
\begin{equation}\setlength\abovedisplayskip{6pt}\setlength\belowdisplayskip{6pt}
    \label{eq:res_min}
    \res (\lambda; M, N)
        = \min_{c^* G c = 1} \res (\lambda, c; M, N) , 
\end{equation}
then 
$
    \lim^{}_{M \to \infty} \res (\lambda; M, N)
        = \min_{\substack{g \in \spn \overline{\scrX} \\ \| g \|^{}_{L^2} = 1}}
            \ \left\| ( \scrK - \lambda I ) g \right\|_{L^2} .
$
This yields 
$$\setlength\abovedisplayskip{6pt}\setlength\belowdisplayskip{6pt}
    \sigma_{ap, \epsilon} (\scrK) = \left\{ 
        \lambda \in \bbC \,\Big\vert\, 
        \lim_{N \to \infty} \lim_{M \to \infty} \res (\lambda; M, N) \leq \epsilon
    \right\} .  
$$
In particular, if we
calculate some candidate eigenpairs $(\lambda, c)$, compute $\res (\lambda, c;
M, N)$ for some ``sufficiently large'' $M$ and $N$ on each candidate eigenpair
and keep only those which satisfy a threshhold $\res (\lambda, c) < \epsilon$,
then those remaining eigenpairs really are ``close'' to eigenpairs of $\scrK$.
The computation of $\res (\lambda)$ reduces to a generalized eigenvalue problem.
This process is summarized in \cref{alg:resdmd}.

\begin{algorithm}[t]
    \caption{Residual DMD to Compute $\sigma_{ap, \epsilon} (\scrK)$ \cite{resdmd}}
    \label{alg:resdmd}
    \begin{algorithmic}[1]
        \Require Dictionary $\left\{ \psi_j \right\}_{j=1}^N$, 
            points $\left\{ x_i \right\}_{i=1}^M$, 
            grid $\left\{ z_\nu \right\}_{\nu=1}^T \subset \bbC$,
            tolerance $\epsilon$
        \State Construct $\YX = (\, \psi_j (x_i) \,)_{ij}$, 
            $\ \YY = (\, \psi_j ( S(x_i) ) \,)_{ij}$.
        \State Construct $G = \frac{1}{M} \YX^* \YX$, $\ A = \frac{1}{M} \YX^* \YY$, $\ J = \frac{1}{M} \YY^* \YY$
        \For{$z_\nu$}
        \State Compute $U = J - \overline{z_\nu} A - z_\nu A^* + | z_\nu |^2 G$
            \State Compute $\res (z_\nu)=\sqrt{\xi}$, where $\xi$ is the smallest 
             eigenvalue of the eigenproblem $U c = \xi G c$ 
            
        \EndFor
        \State \Return $\left\{ z_\nu \mid \res (z_\nu) < \epsilon \right\}$
            as an approximation for $\sigma_{ap, \epsilon} (\scrK)$
    \end{algorithmic}
\end{algorithm}

\section{A Residual Method for Frobenius--Perron Operators}

We now build a residual method for Frobenius--Perron operators. From this point
forward, we shall always assume (without loss of generality) that $\YX$ has full
rank $\min\{M, N\}$.

\subsection{A Naive First Attempt at Duality}

\cref{alg:resdmd} provides a way to compute the approximate point pseudospectrum
of $\scrK$. To resolve the true pseudospectrum, one needs to compute both,
$\min_{\| g \|^{}_{L^2} = 1} \left\| (\scrK - \lambda I) g \right\|_{L^2}$
\emph{and} $\min_{\| g \|^{}_{L^2} = 1} \left\| (\scrL - \lambda I) g
\right\|_{L^2}$ for $\lambda$'s of interest. 

At first, one might hope to perform the calculations in \cite{resdmd} using
$\scrL$ instead of~$\scrK$. That is, starting with $\| (\scrL - \lambda I)
g\|_{L^2}^2$ and aiming for an expression like \cref{eq:res_expanded}. However, the matrix analogous to $J$ is not computable from the available information.
Specifically, for $g =\Psi c$,
\begin{equation*}\setlength\abovedisplayskip{6pt}\setlength\belowdisplayskip{6pt}
    \begin{split}
    \left\| (\scrL - \lambda I) g \right\|_{L^2}^2
    &= \left\langle (\scrL - \lambda I) g, (\scrL - \lambda I) g \right\rangle \\ 
    &= \left\langle \scrL g, \scrL g \right\rangle 
        \ -\ \bar{\lambda} \left\langle g, \scrL g \right\rangle
        \ -\ \lambda \left\langle \scrL g, g \right\rangle
        \ +\ | \lambda |^2 \left\langle g, g \right\rangle \\ 
    &= \left\langle \scrL g, \scrL g \right\rangle 
        \;\ -\;\ \bar{\lambda}\ c^* \bmA^* c 
        \;\ -\;\ \lambda\ c^* \bmA c 
        \;\ +\;\ | \lambda |^2\ c^* \bmG c,
    \end{split}
\end{equation*}
but the term $\left\langle \scrL g, \scrL g \right\rangle_\scrX$ is not
approximable using $\YX$ and $\YY$. Instead, one could start with a regression
error 
\begin{equation}\setlength\abovedisplayskip{6pt}\setlength\belowdisplayskip{6pt}
    \label{eq:L_resression}
    \left\| \left( A^* - \lambda G \right) c \right\|_{\bbC^N}^2 = \frac{1}{M} \left\|
    \left( \YY^* - \lambda \YX^* \right) \YX c \right\|_{\bbC^N}^2
\end{equation}
similar to \cref{eq:res}, but due to the Galerkin property we know that
$L=G^{-1}A^*$ encodes the action of $\Pi \scrL \Pi$ so that $\lim_{M \to \infty}
\left\| (L - \lambda I) c \right\|_{\bbC^N} = \left\| (\Pi \scrL \Pi - \lambda
I) g \right\|_\scrX$ for $g = \Psi c$. Since $G$ is symmetric and positive definite and $\left\| (A^* - \lambda G) c
\right\|_{\bbC^N} = \left\| G (L - \lambda I) c \right\|_{\bbC^N}$, 
$$\setlength\abovedisplayskip{6pt}\setlength\belowdisplayskip{6pt}
     \min \sigma (G)  
    \left\| (L - \lambda I) c \right\|_{\bbC^N} 
    \leq \left\| (A^* - \lambda G) c \right\|_{\bbC^N} 
    \leq  \max \sigma (G) 
    \left\| (L - \lambda I) c \right\|_{\bbC^N} .
$$
Upon taking the limit $M \to \infty$, this yields
\begin{equation}\setlength\abovedisplayskip{6pt}\setlength\belowdisplayskip{6pt}
    \label{eq:lim_bad_regression}
     \min \sigma (\bmG) 
    \left\| (\Pi \scrL \Pi - \lambda I) g \right\|_{L^2} 
    \leq \lim_{M \to \infty} \left\| (A^* - \lambda G) c \right\|_{\bbC^N} 
    \leq  \max \sigma (\bmG)  
    \left\| (\Pi \scrL \Pi - \lambda I) g \right\|_{L^2} , 
\end{equation}
so that \cref{eq:L_resression} only computes (a scaled version of) the
pseudospectrum of $\Pi \scrL \Pi$. This cannot be used analogously to
\cref{alg:resdmd} to compute $\smash{ \min_{\left\| g \right\|^{}_{L^2} = 1}
\left\| (\scrL - \lambda I) g \right\|_{L^2} }$ since we would need to send $N
\to \infty$ before $M \to \infty$. To derive a method that works, we take a detour through kernelized EDMD.

\subsection{Kernelizing Residual DMD: The Frobenius--Perron Connection}
\label{ssec:kres_FP}

We again seek to identify which candidate eigenvalues from \cref{alg:kedmd} are spurious and which are accurate. Equation~\cref{eq:res_expanded} suggests computing $\| (\scrK - \lambda I) g \|_{L^2}$ using modified features $g = \Psi \, Z v$. However, this fails when $M \leq N$—the regime where kernel methods are most advantageous. As shown in \cite{resdmd}, in this regime we have $\res (\lambda, Z v; M, N) = 0$ for any eigenpair $(\lambda, v)$ of $\widehat{K}$, indicating overfitting of the snapshot data. Thus, we must seek an alternative approach to define a residual for the eigenpair.

Recall from equation \cref{eq:res} that $\res$ has an alternative
representation as a regression error, which we deduced from the regression
problem \cref{eq:edmd}. We could analogously ask if $\widehat{K}$ is also the
solution to some other regression problem. Let 
$$\setlength\abovedisplayskip{6pt}\setlength\belowdisplayskip{6pt}
    \hYX = \tfrac{1}{\sqrt{M}} \YX^* Q \Sigma^\dagger = Z, \quad 
    \hYY = \tfrac{1}{\sqrt{M}} \YY^* Q \Sigma^\dagger . 
$$
Then we have from equation \cref{eq:K_hat} that
$$\setlength\abovedisplayskip{6pt}\setlength\belowdisplayskip{6pt}
    \hYX^\dagger \hYY 
    = Z^* \YY^* Q \Sigma^\dagger
    = \left( \Sigma^\dagger Q^* \right) \frac{1}{M} \YX \YY^* \left( Q \Sigma^\dagger \right)
    = \widehat{K}^* . 
$$
Hence $\widehat{K}^*$ is precisely the solution to the least squares problem 
$$\setlength\abovedisplayskip{6pt}\setlength\belowdisplayskip{6pt}
    \min_{B \in \bbC^{N \times N}} \left\| \hYY - \hYX B \right\|_F . 
$$
This means that for a candidate eigenpair $(\lambda, v)$ of $\widehat{K}^*$, the
regression error is given by 
$$\setlength\abovedisplayskip{6pt}\setlength\belowdisplayskip{6pt}
    \widehat{ \kres } (\lambda, v; M, N) = 
    \left\| \left( \hYY - \lambda \hYX \right) v \right\|_{\bbC^N} . 
$$
As before, we suppress the arguments $M$ and $N$ when appropriate.

\paragraph{Interpretation}
At this point, it is unclear whether $\widehat{ \kres }$ has any physical
meaning. It serves as an error metric for a seemingly arbitrary least squares regression problem involving the matrices $\hYX$ and $\hYY$, which lack a clear interpretation. However, the fact that the \emph{adjoint} $\widehat{K}^*$ solves the least squares problem should raise the suspicion that the residual may be more closely related to the Frobenius--Perron operator than to the Koopman operator.

In \cite{kresdmd}, it is suggested to define 
\begin{equation}\setlength\abovedisplayskip{6pt}\setlength\belowdisplayskip{6pt}
    \label{eq:res_hat_no_v}
    \widehat{ \kres } (\lambda; M, N)
    = \min_{v^* v = 1} \widehat{ \kres } (\lambda, v; M, N) 
\end{equation}
and use this analogously to \cref{alg:resdmd}. This is summarized in
\cref{alg:kresdmd}. Since in the regime $N \leq M$, $\Sigma$ and $Z$ are full
rank (actually $Z$ is unitary), we may make the substitution $v = \Sigma^2 Z^*
c$ in \cref{eq:res_hat_no_v}. Now $Q \Sigma^\dagger \Sigma^2 Z^* = Q \Sigma Z^*
= \tfrac{1}{\sqrt{M}} \YX$ so that 
\begin{equation}\setlength\abovedisplayskip{6pt}\setlength\belowdisplayskip{6pt}
    \label{eq:kres_hat_lambda}
    \widehat{ \kres } (\lambda) 
    = \min_{c^* Z \Sigma^4 Z^* c = 1} \Big\| \frac{1}{\sqrt{M}}
        (\YY^* - \lambda \YX^*) \YX c
     \Big\|_{\bbC^N}
    = \min_{c^* G^2 c = 1} \big\| 
        (A^* - \lambda G) c
     \big\|_{\bbC^N} . 
\end{equation}
From \cref{eq:lim_bad_regression} we know the term $\left\| (A^* - \lambda G) c
\right\|_{\bbC^N}$ in the minimization converges to a scaled version of
$\left\| (\Pi \scrL \Pi - \lambda I) \Psi c \right\|_{L^2}$ as $M \to \infty$.
It follows that in the $M \to \infty$ limit the condition $\smash{c^* G^2 c =
c^* \sqrt{G}^* G \sqrt{G} c = 1}$ becomes $\smash{\| \Psi\,  \sqrt{\bmG} c
\|_{L^2}^2 = 1}$. Hence, with 
\begin{equation}\setlength\abovedisplayskip{6pt}\setlength\belowdisplayskip{6pt}
    \label{eq:GammaDef}
    \Gamma \quad = \quad \min_{\substack{
        c \in \bbC^N \\ 
        \| \Psi\,  \sqrt{\bmG} c  \|_{L^2}^2 = 1
    }}\quad \big\| (\Pi \scrL \Pi - \lambda I) \Psi c \big\|_{L^2}
\end{equation}
we have 
$$\setlength\abovedisplayskip{6pt}\setlength\belowdisplayskip{6pt}
    ( \min \sigma (\bmG) )\ \Gamma 
    \ \leq\ \lim_{M \to \infty} \widehat{ \kres } (\lambda; M, N) 
    \ \leq\ ( \max \sigma (\bmG) )\ \Gamma . 
$$

We will next show a stronger connection to residuals with respect to $\scrL$. A key step is deriving a natural normalization in~\cref{eq:GammaDef}, ensuring that $\Psi c$ is $L^2$-normalized. This naturally leads to minimizing over $c^*Gc=1$ in~\cref{eq:kres_hat_lambda}, as is done in~\cref{eq:kres_def} below.

\begin{algorithm}[t]
    \caption{Kernelized ResDMD as in \cite{kresdmd}}
    \label{alg:kresdmd}
    \begin{algorithmic}[1]
        \Require kernel $k : \Omega \times \Omega \to \bbC$, data points 
            $\left\{ x_i \right\}_{i=1}^M$, compression factor $r \leq M$,
            grid $\left\{ z_\nu \right\}_{\nu=1}^T \subset \bbC$,
            tolerance $\epsilon$ 
        \State Construct $\widehat{G} = (\, \frac{1}{M} k ( x_l, x_i ) \,)_{i, l = 1}^M$, 
        $\ \widehat{A} = (\, \frac{1}{M} k ( S(x_l), x_i ) \,)_{i, l = 1}^M$, 
        \mbox{$\ \widehat{J} = (\, \frac{1}{M} k ( S(x_l), S(x_i) ) \,)_{i, l = 1}^M$}
        \State Compute an eigendecomposition $\widehat{G} = Q \Sigma^2 Q^*$
        \State Let $\widetilde{\Sigma} = \Sigma [1:r, 1:r]$, $\widetilde{Q} = Q [:, 1:r]$ ($r$ largest singular values and vectors)
        \State Construct $\widehat{K} = 
            ( \widetilde{\Sigma}^\dagger \widetilde{Q}^* )
            \widehat{A}
            ( \widetilde{Q} \widetilde{\Sigma}^\dagger )$
        \For{$z_\nu$}
        \State Compute $\widehat{ U } =
            ( \widetilde{ \Sigma }^\dagger \widetilde{ Q }^* ) 
            \widehat{ J } ( \widetilde{ Q } \widetilde{ \Sigma }^\dagger )
            - \overline{z_\nu} \widehat{ K } - z_\nu \widehat{ K }^* + | z_\nu |^2 I$
        \State Compute $\widehat{ \kres } (z_\nu)=\sqrt{\xi}$, where $\xi$ is  
            the smallest eigenvalue of $\widehat{ U }$ 
        \EndFor
        \State \Return $\{ z_\nu \mid \widehat{\kres} (z_\nu) < \epsilon \}$
    \end{algorithmic}
\end{algorithm}

\subsection{A New Residual with the Desired Limit Behavior}\label{sec:kres}

To derive a computable estimate for the true residual $\big\| (\scrL - \lambda
I) \Psi c \big\|_{L^2}$, we first take a closer look at the approximation
(sub)space $\spn(\Sigma^2 Z^*)$ in~\cref{eq:kres_hat_lambda}. The key
construction which enables us to let $N\to\infty$ before $M\to \infty$ lies in
the compression factor $r$ from \cref{alg:kedmd}: Dropping the previous
assumption $r = \text{rank} ( \YX )$, we now fix $r \leq \text{rank} ( \YX )$
and consider the \emph{truncated} SVD 
\begin{equation}\setlength\abovedisplayskip{6pt}\setlength\belowdisplayskip{6pt}
    \label{eq:truncated_SVD}
    \tfrac{1}{\sqrt{M}} \YX \approx \widetilde{ Q } \widetilde{ \Sigma } \widetilde{ Z }^*, \quad
    \widetilde{ Q } \in \bbC^{M \times r},\ 
    \widetilde{ \Sigma } \in \bbC^{r\times r},\ 
    \widetilde{ Z }^* \in \bbC^{r \times N} . 
\end{equation}
Considering the $\Psi(x_i)$, $i=1,\ldots,M$, as $N$-dimensional data points, their empirical covariance matrix decomposes as $\frac1M \Psi_X^* \Psi_X = \widetilde{ Z } \widetilde{ \Sigma }^2 \widetilde{ Z }^*$. This implies that $\widetilde{ \Sigma }^2 = \widetilde{ Z }^* \left(\frac1M \Psi_X^* \Psi_X\right) \widetilde{ Z }$, which reads as discretized $L^2$-orthogonality, when expanded:
$$\setlength\abovedisplayskip{6pt}\setlength\belowdisplayskip{6pt}
\frac1M \sum_{i=1}^M \Big( \sum_{n=1}^N \psi_n(x_i) \widetilde{ Z }_{nj} \cdot \sum_{n=1}^N \psi_n(x_i) \widetilde{ Z }_{nj'} \Big) = \left\{ \begin{array}{ll}
    0 & j\neq j', \\
    \widetilde{ \Sigma }_{jj}^2 & j=j'.
\end{array}\right.
$$
Borrowing from statistical learning theory~\cite[p.~66]{statisticallearning}, the subspace spanned by the $L^2$-orthogonal observables $\smash{ \widetilde{\psi}_j = \sum_{n = 1}^N \widetilde{Z}_{n j}\, \psi_n }$, $j=1,\ldots,r$, has the largest variance when evaluated on the data points~$x_i$ (constrained to the columns of $\widetilde{ Z }$ being orthonormal vectors in $\bbC^N$).
The matrix $\widetilde{Z}^*$ represents the transformation from the dictionary space spanned by $\psi_1, \ldots, \psi_N$ to the
space spanned by the $r$ largest principal orthogonal components 
$ \widetilde{ \psi }_1, \ldots, \widetilde{ \psi }_r \subset L^2 $.
Note that we deliberately used $\frac1M$ instead of $\frac{1}{M-1}$ for the empirical covariance matrix, because on the one hand the principal orthogonal components are not affected by this change and on the other hand for large $M$ the difference vanishes.

The benefit of this truncation is that we \emph{decouple} the $N$ and $M$
limits and therefore allow performing the limit $N \to \infty$ \emph{before} $M
\to \infty$, which is necessary since a priori the kernel feature map may be
infinite-dimensional. Using the new dictionary 
\begin{equation}\setlength\abovedisplayskip{6pt}\setlength\belowdisplayskip{6pt}
    \label{eq:Psi_tilde}
    \smash{\widetilde{\Psi} = \big[ \widetilde{ \psi }_1 \mid \ldots \mid \widetilde{ \psi }_r \big]}
\end{equation}
and $M \geq r$ data points $x_i$, we construct the data matrix $\tYX = (\widetilde \psi_j(x_i))_{ij}$. 
Note that the $\widetilde{\psi}_j$, $j=1,\ldots,r$, are obtained the very same way irrespective of $N<\infty$ or~$N = \infty$.

Consider for a moment the purely formal limits $N \to \infty$ before $M \to
\infty$. Using the same thoughts as in preceding subsections, 
we notice that for a candidate eigenpair $(\lambda, h) \in
\bbC \times \spn \big\{\widetilde{ \psi }_1, \ldots, \widetilde{ \psi }_r \big\}$ for~$\scrL$, where $h = \tY u$ is encoded by the vector $u \in \bbC^r$, we have that
\begin{equation}\setlength\abovedisplayskip{6pt}\setlength\belowdisplayskip{6pt}
    \label{eq:kres_as_adjoints}
    \begin{split}
        \lim_{M \to \infty} \Big\| \frac{1}{\sqrt{M}} \left( \YY^* - \bar{\lambda} \YX^* \right)& \tYX u \Big\|_{\ell^2}^2 
        = \left\| \left(\, (\scrK \Psi)^* - \bar{\lambda} \Psi^* \,\right) h \right\|_{\ell^2}^2 \\
        &= \sum_{j=1}^{\infty} \left| \left\langle\ (\scrK - \bar{\lambda} I) \psi_j, h \ \right\rangle \right|^2 
        = \sum_{j=1}^{\infty} \left| \left\langle\ \psi_j, (\scrL - \lambda I) h \ \right\rangle \right|^2 . 
    \end{split}
\end{equation}
This suggests that one should define 
\begin{equation}\setlength\abovedisplayskip{6pt}\setlength\belowdisplayskip{6pt}
    \label{eq:kres_def}
    \kres (\lambda, u; r, M) = 
    \frac{1}{\sqrt{M}} \left\| \left( \YY^* - \bar{\lambda} \YX^* \right) \tYX u \right\|_{\ell^2} , \quad
    \kres (\lambda; r, M) = 
    \min_{u^* \widetilde{ \Sigma }^2 u = 1} \kres (\lambda, u; r, M) .
\end{equation}

\subsection{Convergence Theorem} To prove a convergence theorem for this residual, we require the following lemma.

\begin{lemma}
    \label{lem:kres}
    Let $k \colon \Omega \times \Omega \to \bbC$ be a Mercer kernel and $\Psi$
    be the dictionary of Mercer features $\psi_j = \mu_j \phi_j$, $j = 1, 2, \ldots$. 
    Let further $\widetilde{ \Psi }$ and
    $\kres$ be defined as in~\cref{eq:kres_def,eq:Psi_tilde}, respectively, for
    an $r \in \bbN$. Then 
    \begin{equation}\setlength\abovedisplayskip{6pt}\setlength\belowdisplayskip{6pt}
        \label{eq:kres_M_limit}
        \lim_{M \to \infty}\ \kres (\lambda; r, M)^2 
        = \min_{\substack{
            h \in \spn \{ \widetilde{ \psi }_1, \ldots, \widetilde{ \psi }_r \} \\ 
            \left\| h \right\|^{}_{L^2} = 1
        }}\ \sum_{j=1}^{\infty} \left| \left\langle\ 
            \psi_j, (\scrL - \lambda I) h 
        \ \right\rangle \right|^2 
        \quad \forall\ \lambda \in \bbC . 
    \end{equation}
\end{lemma}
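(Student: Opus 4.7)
The plan is to collapse the (possibly infinite) sum over $j$ into a finite quadratic form in the data points via Mercer's theorem, apply Monte Carlo quadrature to the resulting finite expression, and conclude by continuity of constrained minimization over finite-dimensional quadratic forms.

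Set $f_j := (\scrK - \bar\lambda I)\psi_j$ and $h_u := \widetilde\Psi u \in V_r := \spn\{\widetilde{\psi}_1,\ldots,\widetilde{\psi}_r\}$. Expanding $\|(\YY^* - \bar\lambda\YX^*)\tYX u\|_{\ell^2}^2$ componentwise and interchanging the order of the resulting triple sum, using $(\tYX u)_i = M^{-1/2}\, h_u(x_i)$ (from $\tYX = \widetilde Q\widetilde\Sigma$), one obtains
\begin{equation*}
\kres(\lambda, u; r, M)^2 = \frac{1}{M^2}\sum_{i, i'=1}^M h_u(x_i)\,\overline{h_u(x_{i'})}\,\mathcal{K}_\lambda(x_i, x_{i'}),
\qquad \mathcal{K}_\lambda(x, y) := \sum_j \overline{f_j(x)}\, f_j(y).
\end{equation*}
The Mercer identity $\sum_j \overline{\psi_j(x)}\psi_j(y) = k(x, y)$ together with $(\scrK\psi_j)(z) = \psi_j(S(z))$ evaluates $\mathcal{K}_\lambda$ in closed form,
\begin{equation*}
\mathcal{K}_\lambda(x, y) = k(S(x), S(y)) - \bar\lambda\, k(S(x), y) - \lambda\, k(x, S(y)) + |\lambda|^2\, k(x, y),
\end{equation*}
so the $j$-summation is absorbed entirely into kernel evaluations and no interchange of limits with an infinite $j$-sum is required.

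Standard Monte Carlo convergence of the empirical measure $\tfrac{1}{M}\sum_i \delta_{x_i}$ to the reference measure on $\Omega$ now yields
\begin{equation*}
\lim_{M\to\infty}\kres(\lambda, u; r, M)^2 = \iint h_u(x)\,\overline{h_u(y)}\,\mathcal{K}_\lambda(x, y)\,dx\,dy,
\end{equation*}
and re-expanding $\mathcal{K}_\lambda$ into its $j$-series (justified by the trace-class property of the associated integral operator $T_{\mathcal{K}_\lambda}$) combined with the adjoint relation $\scrL = \scrK^*$ identifies this integral with $\sum_j |\langle f_j, h_u\rangle|^2 = \sum_j |\langle \psi_j, (\scrL - \lambda I) h_u\rangle|^2$. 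An analogous quadrature argument applied to $\widetilde\Sigma^2 = \widetilde{Z}^*\bigl(\tfrac{1}{M}\YX^*\YX\bigr)\widetilde{Z}$ gives $u^*\widetilde\Sigma^2 u \to \|h_u\|_{L^2}^2$, so the normalization constraint $u^*\widetilde\Sigma^2 u = 1$ converges to $\|h_u\|_{L^2} = 1$.

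Both the objective and the constraint are therefore positive semidefinite quadratic forms on the finite-dimensional space $\bbC^r$ whose Hermitian matrices converge entrywise, with the constraint matrix remaining uniformly positive definite for all sufficiently large $M$. Continuity of the smallest generalized eigenvalue of a PSD/PD Hermitian pencil of fixed size $r$ then lets us pass the limit $M\to\infty$ through the minimization, yielding the right-hand side of~\cref{eq:kres_M_limit} after reindexing by $h = h_u$ over $V_r$. The main obstacle one would otherwise face—exchanging $\lim_{M\to\infty}$ with the potentially infinite sum over $j$—is entirely sidestepped by the Mercer collapse in the first step; everything else is routine quadrature convergence and perturbation theory for finite Hermitian pencils.
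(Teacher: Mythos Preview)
Your argument is correct and takes a genuinely different route from the paper. The paper's proof introduces an auxiliary truncation of the Mercer dictionary to $P$ terms and controls four separate error remainders $R_1,\ldots,R_4$ by carefully choosing $P$ and then $M$. You instead exploit the Mercer identity $\sum_j \psi_j(z)\overline{\psi_j(w)} = k(w,z)$ to collapse the infinite $j$-sum into the closed-form kernel $\mathcal{K}_\lambda$ (this is essentially the content of the paper's \S3.5, which you reuse rather than re-derive), then apply quadrature convergence directly to the resulting $r\times r$ Hermitian pencil. This sidesteps the truncation parameter $P$ entirely and reduces the proof to standard V-statistic convergence plus continuity of the smallest generalized eigenvalue, which is cleaner. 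What the paper's approach buys in return is explicit, stepwise error bounds suitable for validated numerics (as the authors note in a remark following their proof); your route sacrifices that but is otherwise more direct.

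Two minor points that do not affect validity: the cross-term coefficients in your formula for $\mathcal{K}_\lambda$ should read $-\lambda\, k(S(x),y) - \bar\lambda\, k(x,S(y))$ (swapped from what you wrote), and your identification $(\tYX u)_i = M^{-1/2} h_u(x_i)$ uses $\tYX = \widetilde Q\widetilde\Sigma$ rather than the paper's stated $\tYX = (\widetilde\psi_j(x_i))_{ij}$, which differs by a factor $\sqrt{M}$; the paper is itself inconsistent on this normalization, and your choice is the one under which both the objective and the constraint have finite limits, so it is the correct reading. Finally, both your proof and the paper's silently treat the SVD-derived basis $\widetilde\psi_1,\ldots,\widetilde\psi_r$ as fixed when sending $M\to\infty$, even though $\widetilde Z$ is data-dependent; this is an ambiguity in the setup that you inherit rather than introduce.
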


\begin{proof}
See \cref{sec:appendixA}.
\end{proof}

\begin{theorem}
    Let $k$, $\Psi$, $\tY$, $\kres,\mu_j$ be as in Lemma~\ref{lem:kres}. 
    Then for all $\epsilon > 0$ there exists an $\bar{M} = \bar{M} (\epsilon) > 0$ such that 
    \begin{equation}\setlength\abovedisplayskip{6pt}\setlength\belowdisplayskip{6pt}
        \label{eq:kres_fixed_h}
        \kres (\lambda, u; r, M) < \mu_1\left\| \left( \scrL - \lambda I \right) \widetilde{
    \Psi }u \right\|_{L^2} + \epsilon\quad\forall \lambda \in \bbC,
        u \in \bbC^r,M>\bar{M}.
    \end{equation}
        Assume additionally that the RKHS generated by $k$ is dense in $L^2$. Then 
    \begin{equation}\setlength\abovedisplayskip{6pt}\setlength\belowdisplayskip{6pt}
        \label{eq:lower_bound}
        \lim_{r \to \infty} \lim_{M \to \infty} \kres (\lambda; r, M) \,\leq\, 
        \mu_1 \min_{\| h \|_{L^2} = 1} 
        \left\| \left( \scrL - \lambda I \right) h \right\|_{L^2} 
        \quad \forall\, \lambda \in \bbC . 
    \end{equation}
    Moreover, the left-hand side of \cref{eq:lower_bound} is strictly greater
    than $0$ whenever the right-hand side is greater than $0$, but there is no
    uniform lower bound of the form constant multiplied by $\cdot \min_{\| h \|^{}_{L^2} = 1}
    \left\| \left( \scrL - \lambda I \right) h \right\|_{L^2}$. 
\end{theorem}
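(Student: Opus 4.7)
The plan is to combine Lemma~\ref{lem:kres} with the fact that the Mercer features $\psi_j = \mu_j \phi_j$ are a rescaling of an orthonormal basis $\{\phi_j\}$ of $L^2$. The key Parseval bound, valid for every $v \in L^2$, is
\begin{equation*}
\sum_{j=1}^\infty |\langle \psi_j, v\rangle|^2 = \sum_{j=1}^\infty \mu_j^2 |\langle \phi_j, v\rangle|^2 \leq \mu_1^2 \|v\|_{L^2}^2,
\end{equation*}
where the inequality uses that the $\mu_j$ are nonincreasing with maximum $\mu_1$. Applying this with $v = (\scrL - \lambda I)\widetilde{\Psi}u$ and invoking the pointwise identity \cref{eq:kres_as_adjoints} gives $\lim_{M\to\infty}\kres(\lambda, u; r, M) \leq \mu_1 \|(\scrL - \lambda I)\widetilde{\Psi}u\|_{L^2}$. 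To transfer this to the finite-$M$ inequality \cref{eq:kres_fixed_h}, I would invoke the quadrature convergence that underlies Lemma~\ref{lem:kres}: the integrands being approximated are continuous in $(\lambda, u)$, so a single $\bar{M}(\epsilon)$ absorbs the discretization error into the $+\epsilon$ slack uniformly over the bounded parameter ranges of interest.

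For~\cref{eq:lower_bound} I would minimize the pointwise inequality over all $u$ with $u^*\widetilde{\Sigma}^2 u = 1$; in the $M\to\infty$ limit this constraint is precisely $\|\widetilde{\Psi}u\|_{L^2} = 1$, the normalization chosen in~\cref{eq:kres_def}. This yields
\begin{equation*}
\lim_{M\to\infty}\kres(\lambda; r, M) \leq \mu_1 \min_{\substack{h \in \spn\{\widetilde{\psi}_1,\ldots,\widetilde{\psi}_r\} \\ \|h\|_{L^2}=1}} \|(\scrL - \lambda I)h\|_{L^2} .
\end{equation*}
Next I would let $r \to \infty$. Because the PCA characterization of the $\widetilde{\psi}_j$ produces nested subspaces whose union contains the RKHS, the density hypothesis implies $\bigcup_r \spn\{\widetilde{\psi}_1,\ldots,\widetilde{\psi}_r\}$ is dense in $L^2$; a standard density/continuity argument then replaces the constrained minimum by $\min_{\|h\|_{L^2}=1}\|(\scrL - \lambda I)h\|_{L^2}$, delivering \cref{eq:lower_bound}.

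For the structural statements: strict positivity when the right-hand side is positive is essentially a finite-dimensional fact. By Lemma~\ref{lem:kres}, $\lim_{M\to\infty}\kres(\lambda; r, M)^2$ is the minimum over the compact unit sphere of $\spn\{\widetilde{\psi}_1,\ldots,\widetilde{\psi}_r\}$ of $\|D(\scrL - \lambda I)h\|_{L^2}^2$ with $D\phi_j = \mu_j \phi_j$; whenever $\scrL - \lambda I$ is bounded below it is injective on this finite-dimensional subspace, and the density assumption forces every $\mu_j > 0$, so the integrand is strictly positive on the unit sphere and so is its minimum. To rule out a uniform lower bound of the form $c \cdot \min_{\|h\|_{L^2}=1}\|(\scrL - \lambda I)h\|_{L^2}$, I would construct an explicit example exploiting the compactness of $D$ (since $\mu_j \to 0$): for a suitable $\scrL$ the minimizers $h_r$ can be chosen so that $(\scrL - \lambda I)h_r$ loads its Mercer mass onto modes of arbitrarily large index, driving $\|D(\scrL - \lambda I)h_r\|_{L^2}$ to zero while $\min_{\|h\|_{L^2}=1}\|(\scrL - \lambda I)h\|_{L^2}$ stays bounded away from zero. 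The most delicate point is reconciling these two assertions: the positivity is inherently a pointwise claim in $r$, while the counterexample shows that no uniform survival of it into the $r\to\infty$ limit is possible, so the argument must be scrupulous about the ordering of quantifiers.
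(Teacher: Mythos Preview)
Your proposal is correct and follows essentially the same route as the paper's proof: the Parseval bound $\sum_j|\langle\psi_j,v\rangle|^2\le\mu_1^2\|v\|_{L^2}^2$, the appeal to the quadrature convergence underlying Lemma~\ref{lem:kres} for \cref{eq:kres_fixed_h}, and the density argument for \cref{eq:lower_bound}. Your introduction of the diagonal operator $D\phi_j=\mu_j\phi_j$ is a convenient repackaging of exactly the same computation the paper does, and your positivity argument (injectivity of $D(\scrL-\lambda I)$ on the finite-dimensional sphere) is the paper's ``some $\langle\phi_j,(\scrL-\lambda I)h\rangle\neq 0$'' argument said more carefully; the paper likewise disposes of the no-uniform-lower-bound claim simply by invoking $\mu_j\to 0$, without the explicit counterexample you sketch.
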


\begin{proof}
    Let $\lambda \in \bbC$, $h = \tY u$. Without loss let $\| h \|^{}_{L^2} = 1$
    (otherwise simply rescale). Noting that $\psi_j = \mu_j \phi_j$ are the
    Mercer features, we can rewrite
    $$\setlength\abovedisplayskip{6pt}\setlength\belowdisplayskip{6pt}
        \sum_{j=1}^{\infty} \left| \left\langle\ 
        \psi_j, (\scrL - \lambda I) h 
        \ \right\rangle \right|^2 
        \leq \mu_1^2 \sum_{j=1}^{\infty} \left| \left\langle\ 
            \phi_j, (\scrL - \lambda I) h 
        \ \right\rangle \right|^2
        = \mu_1^2 \left\| (\scrL - \lambda I) h \right\|_{L^2}^2
    $$
    by Parseval's identity since the $\mu_j$ are ordered by decreasing
    magnitude. 

    The proof of \cref{eq:kres_M_limit} yields \cref{eq:kres_fixed_h}. 
    Moreover, if the RKHS generated by $k$ is dense in $L^2$, then
    \cref{eq:lower_bound} follows from \cref{eq:kres_fixed_h} and
    \cref{eq:kres_M_limit}. The final claim of the theorem follows from the fact
    that if $(\scrL - \lambda) h \neq 0$, then there exists a $\phi_j$ such that
    $\left\langle \phi_j, ( \scrL - \lambda I ) h \right\rangle > 0$ because the
    $\phi_j$'s form a complete orthonormal family in $L^2$. However, $\mu_j \to
    0$ as $j \to \infty$ so a $C > 0$ with $C \cdot \min_{\| h \|^{}_{L^2} = 1}
    \left\| \left( \scrL - \lambda I \right) h \right\|^{}_{L^2} \leq \lim_{r
    \to \infty}^{} \lim_{M \to \infty}^{} \kres (\lambda; r, M)$ does not exist. 
\end{proof}

\begin{remark}
    If $\int_\Omega | k(x, x) |^p \,dx < \infty$ for a $p \geq 1$ then by the
    classical theory on Schatten-$p$-class operators~\cite{simon2005trace} and
    the fact that $k$ is symmetric and positive definite, $\mu_1^p \leq
    \int_\Omega | k(x, x) |^p \,dx$. 
\end{remark}

\paragraph{Interpretation}
The theorem does not give an explicit method for computing the pseudospectrum on $L^2$, but instead provides a \emph{necessary} condition for an eigenpair to be $\epsilon$-pseudospectral—allowing one to reject spurious candidates. Since the identity is not a Fredholm integral operator, the multipliers $\mu_j$ must decay to zero. As a result, no sufficient condition on $L^2$ can be obtained from the theorem. Nevertheless, this offers an operator-theoretic perspective on \cref{alg:kresdmd}. In \cite{kresdmd}, the residual $\smash{\widehat{\kres}}$ was introduced ad hoc, and our theorem provides a reason for the observed low residuals.

There is a subtle difference between
$\widehat{ \kres }$ and $\kres$. In the regime $M \leq N < \infty$, if we chose
$r = \text{rank} ( \YX )$ and made the substitution $u = Z^* c$, $c \in \bbC^N$
in \cref{eq:kres_as_adjoints}, we would have $\|\frac{1}{M} ( \YY^* -
\bar{\lambda} \YX^* ) \YX c \|_{\bbC^N}^2$. This is (up to conjugation of
$\lambda$) identical to the term to be minimized in \cref{eq:kres_hat_lambda}.
The truncation and removal of $\smash{\widetilde{ Z }^*}$ in \cref{sec:kres}
simply fixes a basis $\smash{\{ \widetilde{ \psi }_1, \ldots, \widetilde{ \psi
}_r \}}$ of the $r$-dimensional subspace of $L^2$ over which we minimize in
\cref{eq:kres_M_limit}, and allows the $N$ and $M$ limits to occur in the right
order. Combined with the ``natural'' normalization, these subtle changes
unlocked a functional-analytic interpretation.

\subsection{Computation}\label{sec:kres_computation}

We conclude by demonstrating how to compute $\kres (\lambda)$. We have 
$$\setlength\abovedisplayskip{6pt}\setlength\belowdisplayskip{6pt}
    \kres (\lambda)^2
    = \min_{u^* \widetilde{ \Sigma }^2 u = 1} \kres (\lambda, u)^2 = \min_{u^* \widetilde{ \Sigma }^2 u = 1} \left( \tYX u \right)^* \left( 
        \widehat{ J } 
        - \lambda \widehat{ A } 
        - \bar{\lambda} \widehat{ A }^* 
        + | \lambda |^2 \widehat{ G }
    \right) \left( \tYX u \right) .
$$
Letting $w = \widetilde{ \Sigma } u$ and noting that by assumption $\widetilde{
\Sigma }_{i i} \neq 0$ for all $1 \leq i \leq r$, this becomes 
$$\setlength\abovedisplayskip{6pt}\setlength\belowdisplayskip{6pt}
    \min_{w^* w = 1} w^* \left( 
        \widetilde{ Q }^* \widehat{ J } \widetilde{ Q }
        - \lambda \widetilde{ Q }^* \widehat{ A } \widetilde{ Q }
        - \bar{\lambda} \widetilde{ Q }^* \widehat{ A }^* \widetilde{ Q }
        + | \lambda |^2 \widetilde{ \Sigma }^2
    \right) w.
$$
With 
$
    \widetilde{ J } = \widetilde{ Q }^* \widehat{ J } \widetilde{ Q }, 
    \widetilde{ A } = \widetilde{ Q }^* \widehat{ A } \widetilde{ Q },
    \widetilde{ G } = \widetilde{ \Sigma }^2, 
$
it follows that $\kres (\lambda)^2$ can be computed from the smallest
eigenvalue of 
\begin{equation}\setlength\abovedisplayskip{6pt}\setlength\belowdisplayskip{6pt}
    \label{eq:kres_3}
    \widetilde{ U }  =
        \widetilde{ J } 
        - \lambda \widetilde{ A } 
        - \bar{\lambda} \widetilde{ A }^*
        + | \lambda |^2 \widetilde{ G } 
        \;\ \in\ \bbC^{r \times r} . 
\end{equation}
\cref{alg:kresdmd_correct} summarises the procedure.

\begin{algorithm}[t]
    \caption{Modified kernel ResDMD with an operator-theoretic interpretation}
    \label{alg:kresdmd_correct}
    \begin{algorithmic}[1]
        \Require kernel $k : \Omega \times \Omega \to \bbC$, data points 
            $\{ x_i \}_{i=1}^M$, compression factor $r \leq M$,
            grid $\left\{ z_\nu \right\}_{\nu=1}^T \subset \bbC$,
            tolerance $\epsilon$
        \State Construct $\widehat{G} = (\, \frac{1}{M} k ( x_l, x_i ) \,)_{i, l = 1}^M$, 
        $\ \widehat{A} = (\, \frac{1}{M} k ( S(x_l), x_i ) \,)_{i l = 1}^M$, 
        \mbox{$\ \widehat{J} = (\, \frac{1}{M} k ( S(x_l), S(x_i) ) \,)_{i l = 1}^M$}
        \State Compute an eigendecomposition $\widehat{G} = Q \Sigma^2 Q^*$
        \State Let $\widetilde{\Sigma} = \Sigma [1:r, 1:r]$, $\widetilde{Q} = Q [:, 1:r]$ ($r$ largest singular values and vectors)
        \State Construct $\widetilde{J} = \widetilde{Q} \widehat{J} \widetilde{Q}$, 
            $\ \widetilde{A} = \widetilde{Q} \widehat{A} \widetilde{Q}$, 
            $\ \widetilde{G} = \widetilde{\Sigma}^2$
        \For{$z_\nu$}
        \State Compute $\widehat{ U } = \widetilde{J} - z_\nu \widetilde{A} 
            - \overline{z_\nu} \widetilde{ A }^* + | z_\nu |^2 \widetilde{G}$
        \State Compute $\kres (z_\nu)=\sqrt{\xi}$, where $\xi$ is the smallest eigenvalue of $\widehat{ U }$
        \EndFor
        \State \Return $\left\{ z_\nu \mid \kres (z_\nu) < \epsilon \right\}$
    \end{algorithmic}
\end{algorithm}

\section{Numerical Experiments}\label{sec:numerics}

We present two experiments illustrating the algorithms from the previous sections. The first uses a system with a known analytical structure to highlight the risks of applying dynamic mode decomposition without error quantification. The second, using real-world protein-folding data, shows how the algorithms perform in practice. Code for all examples is provided in \cite{script}.

\subsection{A Blaschke Product}\label{sec:blaschke}

We consider a family of (complex) analytic
circle maps
\begin{equation}\label{eq:simple_blaschke}\setlength\abovedisplayskip{6pt}\setlength\belowdisplayskip{6pt}
    S : \bbT \to \bbT,\quad z \mapsto z \frac{z - \mu}{1 - \bar{\mu} z}, 
\end{equation}
for $\mu \in \bbD$, with $\bbD$ the open unit disk. The map $S$ is a two-to-one
map on the unit circle $\bbT = \partial \bbD$ and can be analytically extended
to the open annulus $\bbA_r = \left\{ z \in \bbC \mid r < |z| < r^{-1} \right\}$ for
any $r\in [|\mu|, 1)$. The spectrum of the Frobenius--Perron operator $\scrL$ associated to the map in \eqref{eq:simple_blaschke}
has been studied analytically in \cite{SlipantschukNonlin} and for general Blaschke maps in \cite{Slipantschuk}. It was shown that on the
following function space, which is densely and continuously embedded in
$L^2 (\bbT)$, $\scrL$ is compact and has a simple spectrum: 
The space $H^2(\bbA_r)$ of holomorphic functions on $\bbA_r$ which can be extended
to functions that are square integrable on $\partial \bbA_r$. This is known as a \emph{Hardy--Hilbert space}
with inner product 
$$\setlength\abovedisplayskip{6pt}\setlength\belowdisplayskip{6pt}
    {\left\langle f, g \right\rangle}_{H^2 (\bbA_r)}
    = \left[ \lim_{\rho \searrow r }\ \frac{1}{2 \pi} \int_0^{2\pi} f (\rho e^{i \theta}) \cdot \overline{g (\rho e^{i \theta})} \, d\theta \right]
    + \left[ \lim_{\rho \nearrow r^{-1}}\ \frac{1}{2 \pi} \int_0^{2\pi} f (\rho e^{i \theta}) \cdot \overline{g (\rho e^{i \theta})} \, d\theta \right] . 
$$
It is not hard to see that $e_n (z) = z^n / \sqrt{r^{2n} + r^{-2n}}$ is an
orthonormal basis of $H^2(\bbA_r)$. We do not make much use of the structure of $H^2
(\bbA_r)$ at first, aside from taking note that $H^2 (\bbA_r)$ is  isomorphic to a
subspace of $L^2 (\bbT)$. A theorem of \cite{Slipantschuk} states that $\left.
\scrL \right|_{H^2 (\bbA_r)}$ is compact (in fact, Hilbert--Schmidt) and
\begin{equation}\label{eq:blaschke_spectrum}\setlength\abovedisplayskip{6pt}\setlength\belowdisplayskip{6pt}
    \sigma \left( \left. \scrL \right|_{H^2(\bbA_r)} \right) 
    = \sigma_p \left( \left. \scrL \right|_{H^2(\bbA_r)} \right) \cup \{0\}
    = \left\{ \mu^n \mid n \in \bbN_0 \right\} \cup \left\{ \overline{\mu}^{\,n} \mid n \in \bbN_0 \right\} 
    \cup \{ 0 \} . 
\end{equation}
As a consequence of the embedding $H^2 (\bbA_r) \hookrightarrow L^2 (\bbT)$ we have
$\sigma_p ( \left. \scrL \right|_{H^2 (\bbA_r)} ) \subset \sigma_p ( \left. \scrL
\right|_{L^2 (\bbT)} )$.

\subsubsection*{An Initial Numerical Experiment}\label{sec:blaschke_L2} 

We consider the Blaschke product map \cref{eq:simple_blaschke} with $\mu = \frac{3}{4} e^{i
\pi / 4}$. In \cref{fig:blaschke_L2}, the spectrum \cref{eq:blaschke_spectrum} 
of $\left. \scrL \right|_{H^2(\bbA_r)}$
is shown by black dots.  For the EDMD approximation (cf.\ \cref{sec:edmd}), we use a Fourier basis $\psi_n (\theta) = e^{i \pi
n \theta}$, $n = -20, \ldots, 20$ (i.e.\ $N=41$), as a dictionary  and $M = 1000$ equidistant quadrature nodes. The spectrum of the resulting matrix $L$ is shown in \cref{fig:blaschke_L2} by orange crosses,  
matching the spectrum of $\left. \scrL \right|_{H^2(\bbA_r)}$
 (visually) exactly.  This is because (when enough quadrature nodes are used)
 $\sigma(L)$ converges to $\sigma_p(\scrL \left|_{H^2(\bbA_r)}\right.)$ exponentially fast as~$N$ increases \cite{dmdanalytic, edmdexpanding}.

\begin{figure}[h]
    \centering
        \centering
        \includegraphics[trim={0 0 0 1cm},clip,width=0.5\textwidth]{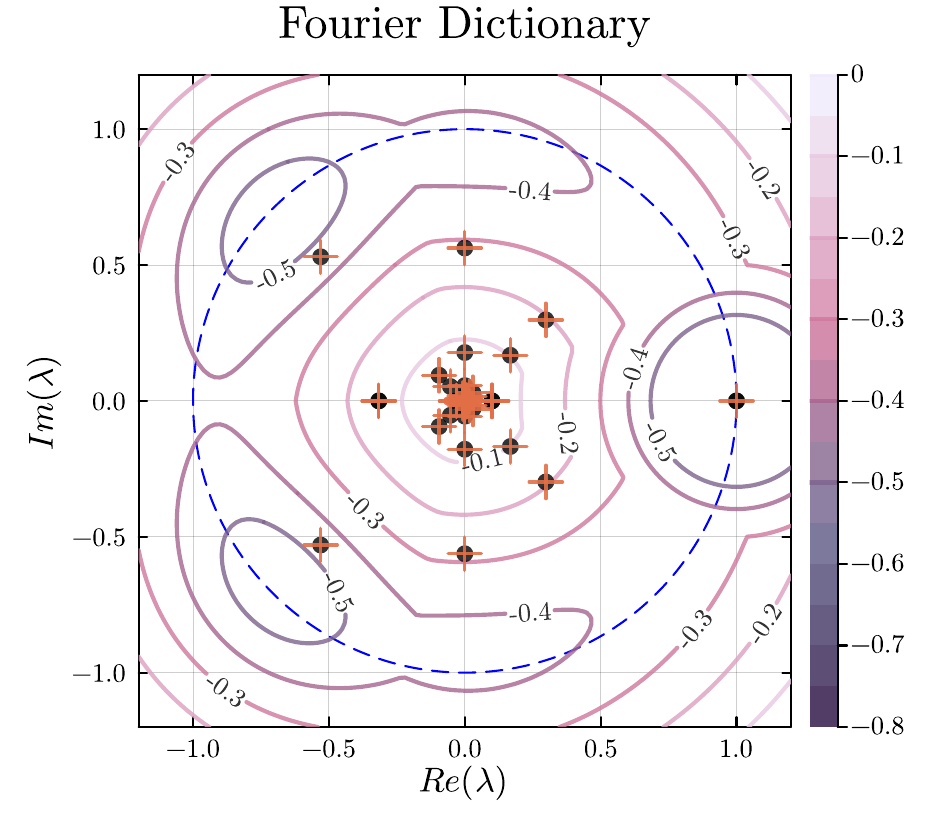}
    \caption{Spectrum of $\left.\scrL \right|_{H^2(\bbA_r)}$ (black dots), 
        spectrum of the EDMD matrix $L$ (orange crosses), residuals
        calculated using \cref{alg:resdmd} (contours are logarithmically scaled).}
        \label{fig:blaschke_L2}
\end{figure}

Since the Fourier basis is orthonormal on $\smash{L^2(\bbT)}$, we have
$G = I$ so that $L = K^*$ (where $K$ denotes the EDMD matrix approximating $\scrK$ from
\cref{sec:edmd}). Additionally, $\sigma(L)$ is symmetric about the real axis so $\sigma
(L) = \sigma (K)$.  It therefore seems tempting to use the ResDMD \cref{alg:resdmd} to compute 
pseudospectra of $\scrK \left|_{L^2(\bbT)}\right.$ and thus check for the reliability of the spectrum of $L$.
The residuals resulting from \cref{alg:resdmd} are shown by contour lines 
in \cref{fig:blaschke_L2}.  Clearly, none of the eigenvalues coincides with a local
minimum of the residual function.  The reason for this seemingly contradictory result
is that the point spectrum of the Koopman operator $\scrK$ on $L^2(\bbT)$ is $\sigma_p ( \left.
\scrK \right|_{L^2(\bbT)} )=\{1\}$, since the Blaschke product map \cref{eq:simple_blaschke}
is mixing. As a consequence, the eigenvalues $\mu^n,\bar\mu^n$ of $\scrL \left|_{H^2(\bbA_r)}\right.$
lie in the residual spectrum of $\left.\scrK \right|_{L^2(\bbT)}$.

\subsubsection*{ResDMD on the dual of $H^2 (\bbA_r)$}\label{sec:blaschke_hardy}

Nonetheless, we can still use ResDMD to verify the computed eigenvalues, provided we use the correct space. Since
$\smash{\left. \scrL \right|_{H^2 (\bbA_r)}}$ is compact and $\smash{\mu^n \in \sigma
( \left. \scrL \right|_{H^2 (\bbA_r)} )}$ for $n > 0$, its dual operator, identified
with the Koopman operator considered on $H^2(\bbA_r)^*$, is compact by Schauder's
theorem~\cite{schaudertheorem} and $\mu^n$ is contained in its spectrum. Here, $H^2 (\bbA_r)^*$ denotes the (Banach space) dual of $H^2(\bbA_r)$ equipped with the $L^2$ norm. We therefore need to do ResDMD on the larger space $H^2 (\bbA_r)^*$.

The space $H^2(\bbA_r)^*$ is isometrically isomorphic to the direct sum
$\scrX_r := H^2 (\bbD_{r }) \oplus H^2 (\bbD_{r^{-1}}^\infty)$ \cite{Slipantschuk}, where
$\bbD_{r} = \left\{ z \in \bbC\mid |z| < r \right\}$ and $H^2
(\bbD_{r^{-1}}^\infty)$ is the set of functions holomorphic on
$\smash{\hat{\mathbb{C}} \setminus \overline{\bbD_{r^{-1}}}}$ which are square
integrable on the boundary $\partial \bbD_{r^{-1}}$ and vanish at infinity. The
space $\scrX_r$ is endowed with
the inner product
\begin{equation}\setlength\abovedisplayskip{6pt}\setlength\belowdisplayskip{6pt}
    \langle f,g\rangle_{\scrX_r} = \sum_{n=-\infty}^{\infty} \overline{c_n (f)} c_n (g)\ r^{2 |n|} ,
\end{equation}
where $c_n (f)$
is the $n$th Fourier coefficient of~$f$. The triple $H^2
(\bbA_r) \subset L^2 (\bbT) \subset H^2 (\bbA_r)^* \simeq \scrX_r$ is known as a Gelfand
triple or rigged Hilbert space. In particular, the space $\scrX_r$ is strictly
larger than $L^2 (\bbT)$, forming a space of distributions (generalized
functions).

We now invoke \cref{alg:resdmd} on the dual
Hardy--Hilbert space $\scrX_r = H^2 (\bbD_{r}) \oplus H^2 (\bbD_{r^{-1}}^\infty)$
(for e.g.~$r=|\mu|$), i.e.\ we approximate $\bmG$ by $G=(G_{jl})_{jl}$ by  
$$\setlength\abovedisplayskip{6pt}\setlength\belowdisplayskip{6pt}
     \left\langle \psi_j, \psi_l \right\rangle_{\scrX_r} \approx G_{j l}
    = \sum_{n=-(N-1)/2}^{(N-1)/2} \overline{c_n (\psi_j)} c_n (\psi_l)\ r^{2 |n|},
$$
and analogously for the matrices $A$ and $J$. The resulting residuals
are shown in \cref{fig:blaschke_other}
(left). Here, in contrast to Figure~\ref{fig:blaschke_L2}, the residuals reproduce the true
point spectrum of $\scrK \left|_{H^2 (\bbA_r)^*}\right.$ (which, as noted above, is the same as $\sigma_p\left(\scrL\left|_{H^2(\bbA_r)}\right.\right)$). 

\begin{figure}[h]
    \centering
    \begin{subfigure}{0.49\textwidth}
        \centering
        \includegraphics[width=\textwidth]{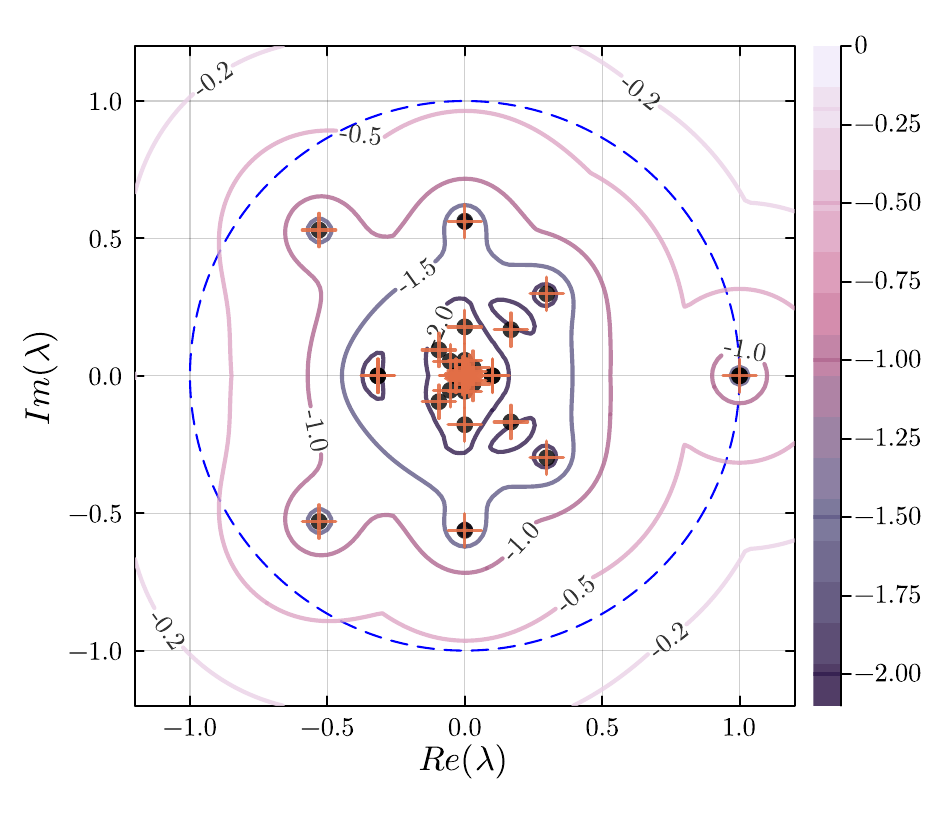}
    \end{subfigure}
    \hfill
    \begin{subfigure}{0.49\textwidth}
        \centering
        \includegraphics[width=\textwidth]{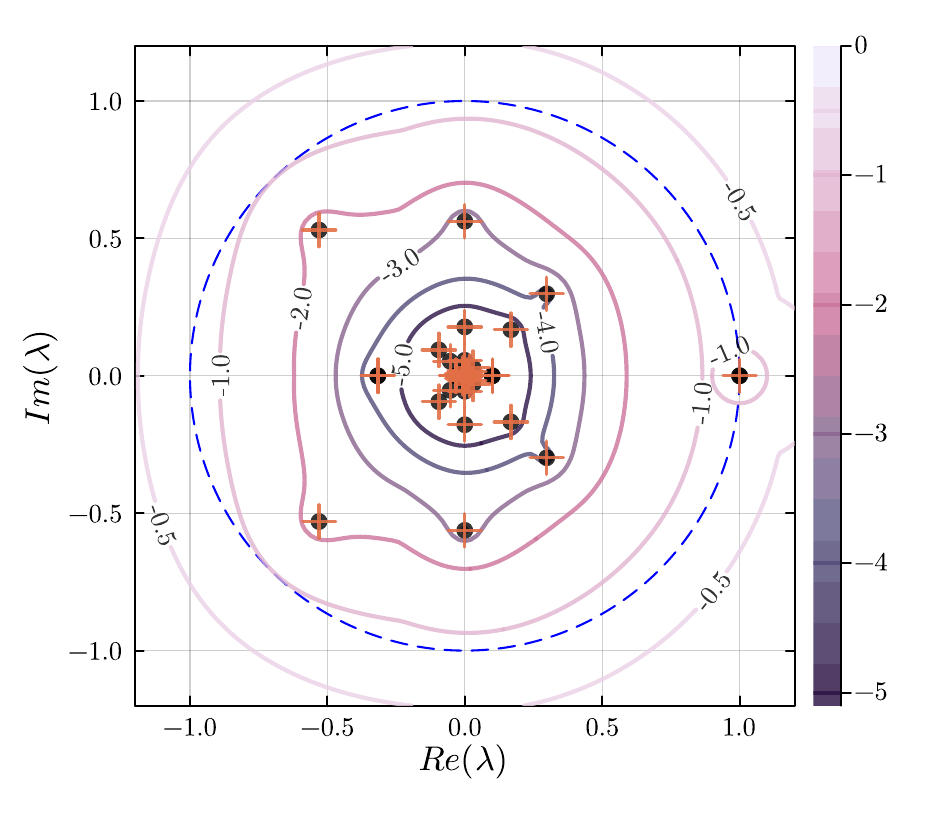}
    \end{subfigure}
    \caption{
        Left: residuals calculated using \cref{alg:resdmd} with the inner product on $H^2 (\bbA_r)^*$.
        Right: spectrum of $\widehat{ K }$ with residuals calculated using 
        \cref{alg:kresdmd_correct}, 
        In both cases, black: true spectrum of $\smash{\left. \scrL \right|_{H^2(\bbA_r)}}$, 
        orange: spectrum of the approximated matrix (left: $L$ using 
        the inner product on $H^2 (\bbA_r)^*$, right: $\widehat{ K }$). Contour lines are 
        \emph{logarithmically scaled}. 
    }\label{fig:blaschke_other}
\end{figure}

\subsubsection*{The Need for Adjoint Methods}

It is worth taking a moment to reconsider what has happened, as it is quite
unintuitive. When we shrunk the domain of $\scrL$ from $L^2(\bbT)$ to $H^2(\bbA_r)$, we removed all elements from
the spectrum which are not in the point spectrum. Correspondingly in the dual,
we enlarged the domain of $\scrK$ from $L^2(\bbT)$ to $H^2(\bbA_r)^*$, so that the residual spectrum 
vanished while some of its points \emph{became} point
spectrum.

This example illustrates the risks of using algorithms like EDMD ``as is''. The EDMD
matrix $K$ --- which was conceived as an approximation of $\smash{\left. \scrK
\right|_{L^2 (\bbT)}}$ --- captures eigenmodes which \emph{do not lie in
$L^2 (\bbT)$}. In order to make sense of the numerics, the function space had to
be very carefully enlarged. However, if one were to use $\scrL$ instead of
$\scrK$, one would not have needed this. This highlights the often overlooked
importance of developing numerical methods for both $\scrK$ \emph{and} $\scrL$,
and underlines the original motivations of this paper, which were to develop a
form of residual-based error control for $\scrL$. 

Indeed, when applying \cref{alg:kresdmd_correct} using a Gaussian kernel $k(w,
z) = \exp ( - \| w - z \|^2 / c^2 )$ with parameter $c^2 = 0.01$ --- which
generates a reproducing kernel Hilbert space similar to $H^2
(\bbA_r)$~\cite{gaussrkhs} --- one is left with similar results as when
applying \cref{alg:resdmd} in the space~$H^2 (\bbA_r)^*$,
cf.~\cref{fig:blaschke_other} (right). However, applying
\cref{alg:kresdmd_correct} requires no prior knowledge of the system. It is here
that we harvest the benefits of having numerical methods for both $\scrK$
and~$\scrL$; the process of delicately expanding the function space using
significant prior knowledge reduced to simply testing a different kernel
function.

\subsubsection*{A Generalized Heuristic}

The preceding result highlights the subtle -- yet crucial -- relationship
between the spectrum of $\scrK$ and the space on which $\scrK$ is considered.
To inspect the effect of smoothness on the analysis, we employ
\cref{alg:resdmd} again on the fractional Sobolev spaces $H^s(\bbT)$, $s\in\bbR$,
which carry the inner product 
$$\setlength\abovedisplayskip{6pt}\setlength\belowdisplayskip{6pt}
    \left\langle f, g \right\rangle_{H^s(\bbT)}
    = \int \left( 1 + | \xi |^2 \right)^s\ \overline{\scrF f (\xi)}\, \scrF g (\xi) \, d \xi,
$$
where $\scrF f$ is the Fourier transform of $f$. For $s\in\bbN$, $H^s(\bbT)$ is the
space of functions $f \in L^2(\bbT)$ whose derivatives of order up to $s$ are also in
$L^2(\bbT)$. In our case (of the circle as the domain), this inner product reduces to a weighted sum
of the Fourier coefficients 
$$\setlength\abovedisplayskip{6pt}\setlength\belowdisplayskip{6pt}
    \left\langle f, g \right\rangle_{H^s (\bbT)}
    = \sum_{n = -\infty}^\infty \left( 1 + | n |^2 \right)^s\ \overline{c_n (f)}\, c_n (g) . 
$$
Due to the embedding $H^{s'}(\bbT) \subset H^s(\bbT)$ for $s \leq s'$ 
\cite{besselpotential}, the fractional Sobolev spaces provide a way to
parametrically shrink or enlarge the space by restricting to function(al)s with
a prescribed level of smoothness. The residuals resulting from \cref{alg:resdmd}
on $H^2(\bbT)$, shown in
\cref{fig:blaschke_sobolev}, depend continuously and monotonically on the
parameter~$s$. 

\begin{figure}[h]
    \centering
    \begin{subfigure}{0.49\textwidth}
        \centering
        \includegraphics[width=\textwidth]{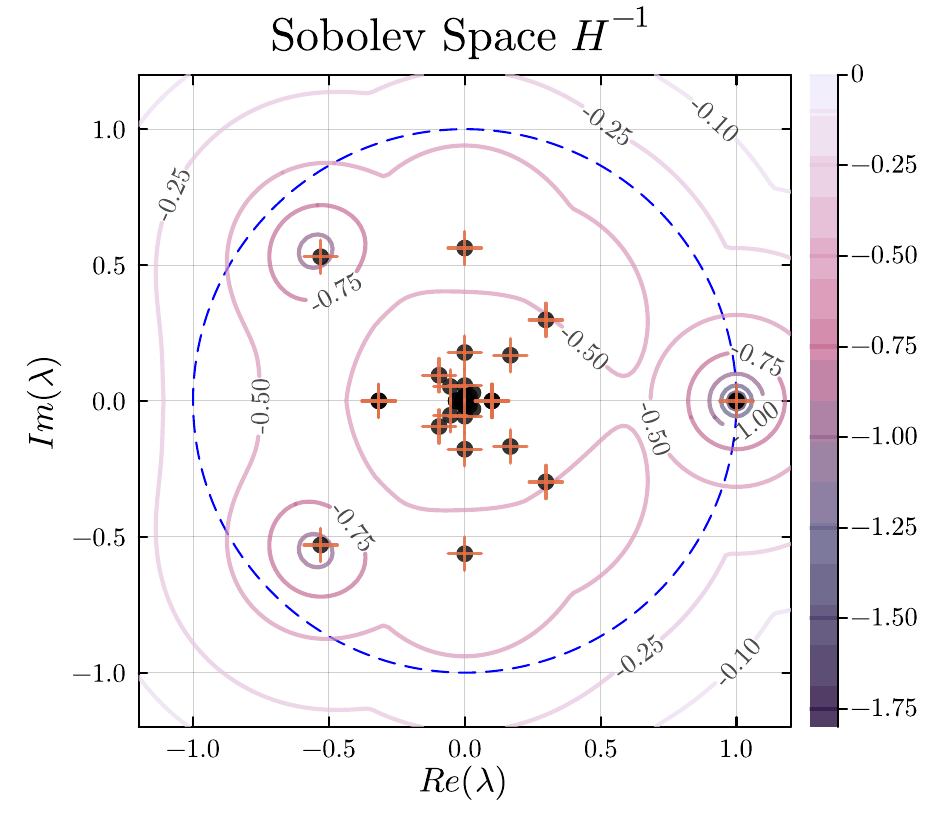}
    \end{subfigure}
    \hfill
    \begin{subfigure}{0.49\textwidth}
        \centering
        \includegraphics[width=\textwidth]{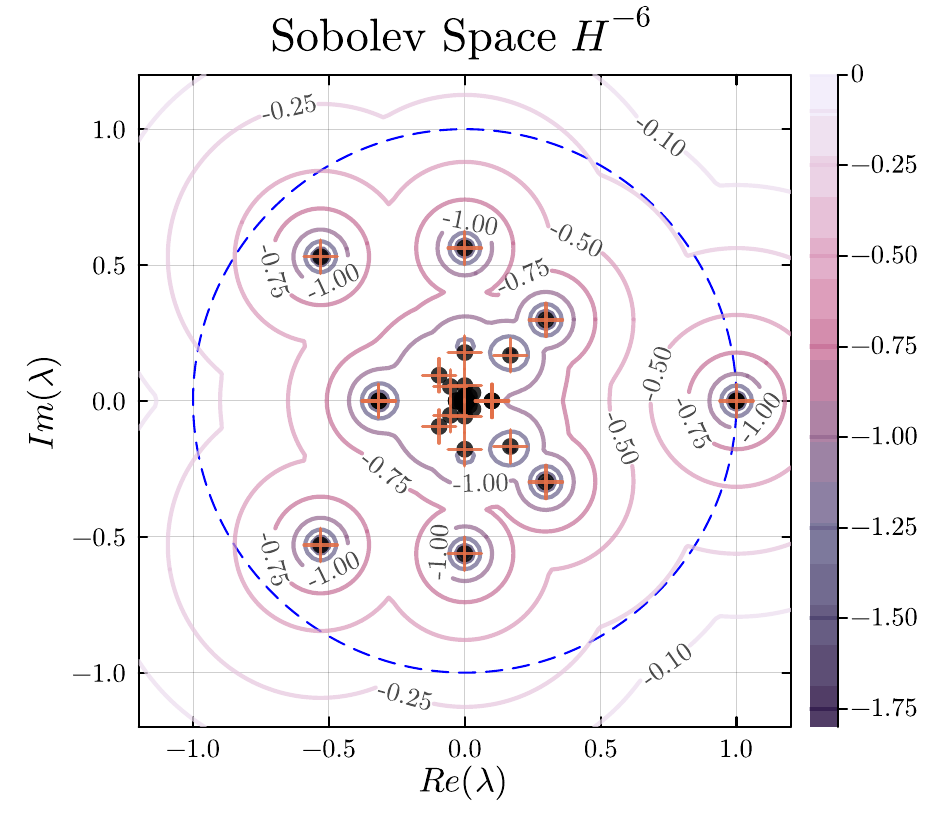}
    \end{subfigure}
    \caption{
        Residuals calculated using \cref{alg:resdmd} on the 
        fractional Sobolev spaces $H^s(\bbT)$. 
        Left: On $H^{-1}(\bbT)$, right: on $H^{-6}(\bbT)$. 
        In both cases, black: true spectrum of $\left. \scrL \right|_{H^2(\bbA_r)}$, 
        orange: spectrum of $L$. Contour lines are 
        logarithmically scaled.
    }\label{fig:blaschke_sobolev}
\end{figure}

\subsubsection*{Normality}  

In \cref{fig:blaschke_sobolev}, it seems as if we decrease~$s$ (i.e.\ enlarge the space), then
$\left. \scrK \right|_{H^s(\bbT)}$ becomes ``more normal'' (since the level sets of the residual
function become ``more disc like'').  And indeed, the
operator norm $\| \scrK \scrL - \scrL \scrK \|_{H^s(\bbT)}$ 
decreases with decreasing~$s$ as shown in~\cref{fig:deviation_from_normality}. 
In order to approximate $\| \scrK \scrL - \scrL \scrK \|_{H^s(\bbT)}$ we first
$H^s(\bbT)$-orthonormalize the Fourier dictionary and then compute the
EDMD matrices $K$ and $L$ using the $H^s(\bbT)$ inner product as usual. Note that
$\| L K - K L \|_{\bbC^N} \approx 
\| \Pi ( \scrL \Pi \scrK - \scrK \Pi \scrL ) \Pi \|_{H^s(\bbT)}$, where 
 $\Pi$ is the orthogonal projector onto the span of the dictionary.

\begin{figure}[h]
    \centering
    \includegraphics[width=0.5\linewidth]{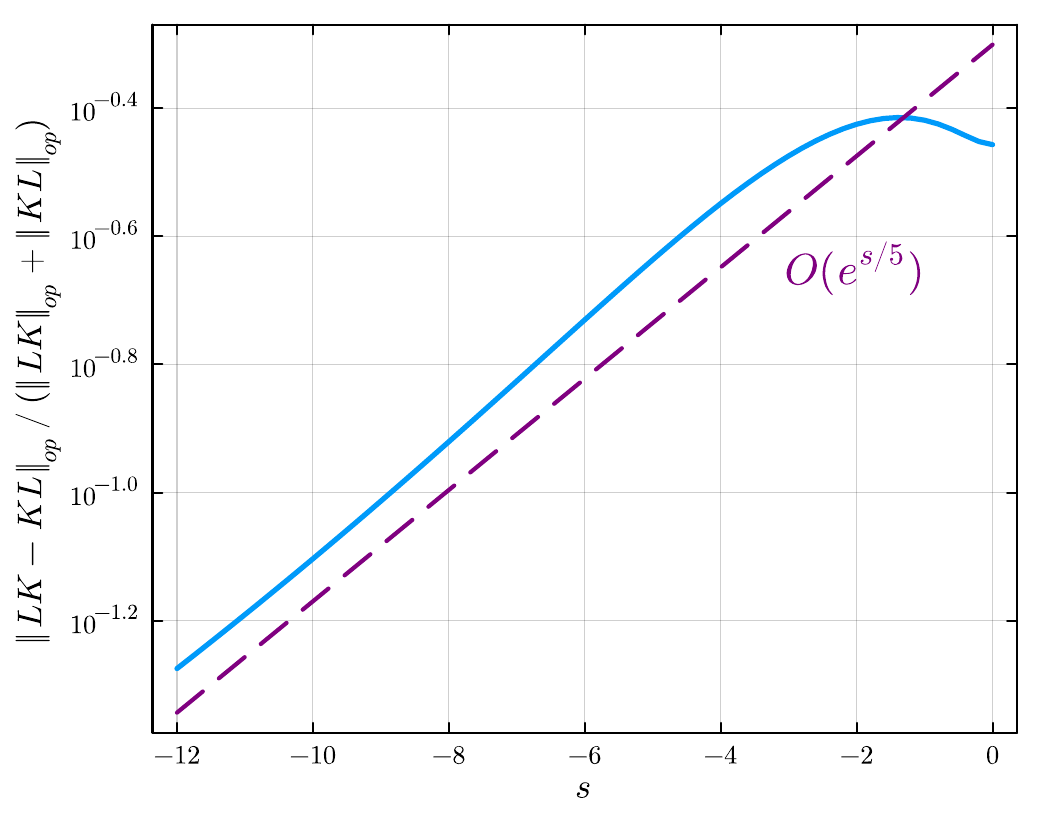} 
    \caption{``Deviation from normality'' of the operator $\left. \scrK \right|_{H^s(\bbT)}$ in dependence of~$s$.}
    \label{fig:deviation_from_normality}
\end{figure}

The reason for this phenomenon is the following: The ``infinite matrix'' 
representation of $\scrK$ in the Fourier basis has the form 
$$\setlength\abovedisplayskip{6pt}\setlength\belowdisplayskip{6pt}
    \left\langle \scrK \frac{e_i}{\| e_i \|_{H^s(\bbT)}^{}},\, \frac{e_j}{\| e_j \|_{H^s(\bbT)}^{}} \right\rangle_{H^s(\bbT)}
    = \left( \frac{1 + | j |^2}{1 + | i |^2} \right)^{s / 2} c_j ( \scrK e_i )
$$
where $e_i : z \mapsto z^i$ is the $i$-th Fourier mode. It is known~\cite{Slipantschuk} that 
$$\setlength\abovedisplayskip{6pt}\setlength\belowdisplayskip{6pt}
    \left(\, c_j ( \scrK e_i ) \,\right)_{i, j \in \bbZ} 
    = \left[\begin{smallmatrix}
        \ddots  &       &       &   \vdots   &       &          &       &       &            \\
                &   *   &   *   &      *     &       &          &       &       &            \\
                &       &   *   &      *     &       &          &   0   &       &            \\
                &       &       &      *     &       &          &       &       &            \\       
                &       &       &            &   1   &          &       &       &            \\
                &       &       &            &       &     *    &       &       &            \\
                &       &   0   &            &       &     *    &   *   &       &            \\
                &       &       &            &       &     *    &   *   &   *   &            \\
                &       &       &            &       &  \vdots  &       &       &  \ddots    \\
    \end{smallmatrix}\right]
$$
from which it is clear that (since $| j | > | i |$ on the nonzero off-diagonal entries) 
 the negative exponent $s / 2$ serves to suppress the off-diagonal elements, so that
 $\left(\, c_j ( \scrK e_i ) \,\right)_{i, j \in \bbZ}$ becomes closer to a diagonal
 matrix for smaller~$s$.
 This result is due to the expansivity of the mapping; 
when the map is contractive, the off-diagonal entries of the ``infinite matrix'' representation of $\scrK$ are 
suppressed by large positive~$s$.

While the structure of the mapping proved useful as a demonstration, 
the above methodology is more general and not tailored to the specific problem
considered here. The use of $H^s$ spaces for dynamic mode decompositions may be
advisable whenever the computation of the Fourier transform is feasible, and one
suspects that the eigenfunctions may be smooth. This can further be exploited
via kernels, as in \cref{alg:kresdmd_correct}. For example, the kernel
$$\setlength\abovedisplayskip{6pt}\setlength\belowdisplayskip{6pt}
    k(w, z) = \int \left( 1 + | \xi |^2 \right)^s \exp (i (w - z) \cdot \xi)\, d \xi 
$$
generates $H^s(\mathbb{R}^d)$~\cite{RKHS}. This is left to future work.

\begin{figure}[!ht]
    \centering
    \begin{subfigure}{0.49\textwidth}
        \centering
        \includegraphics[width=\textwidth]{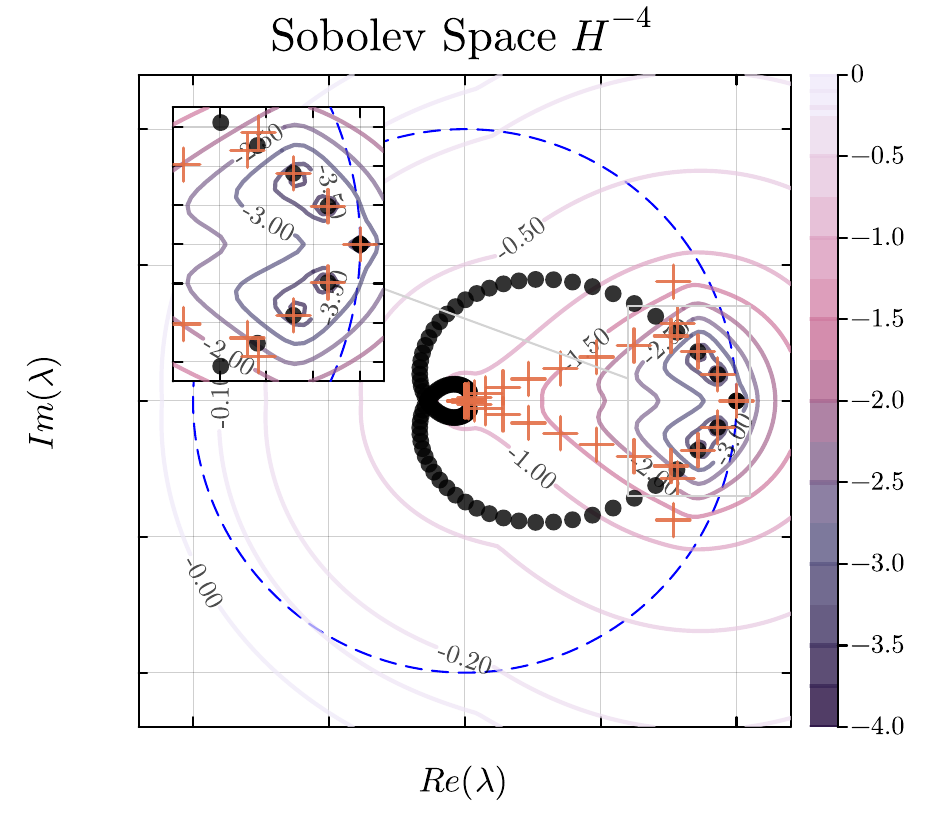}
    \end{subfigure}
    \hfill
    \begin{subfigure}{0.49\textwidth}
        \centering
        \includegraphics[width=\textwidth]{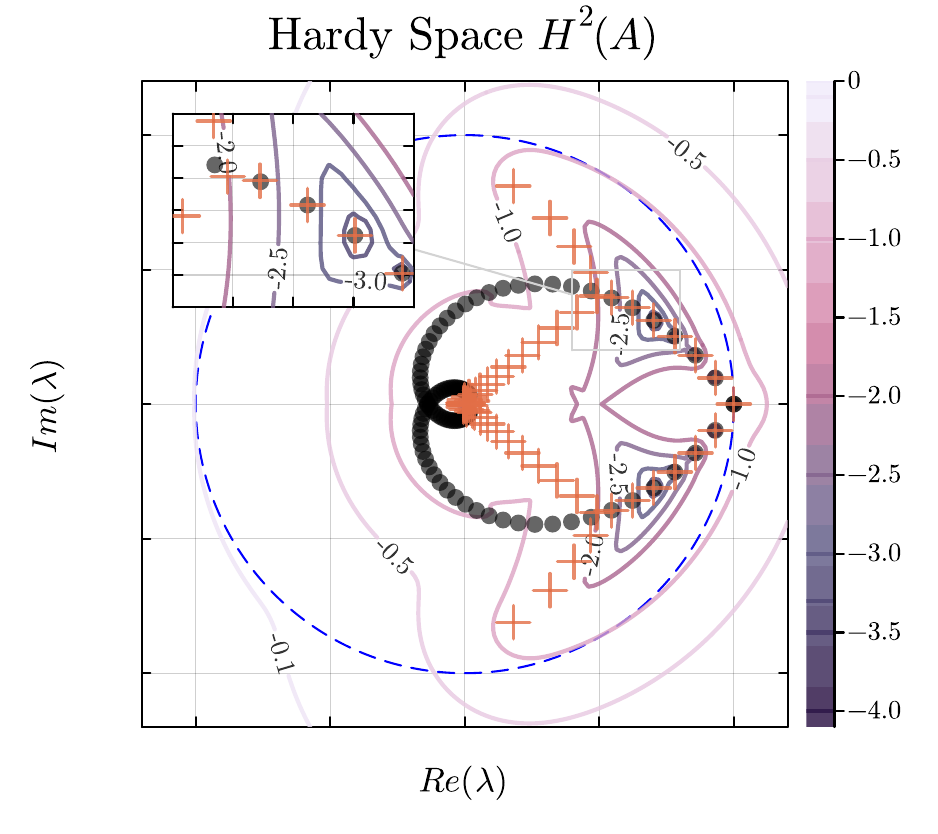}
    \end{subfigure}
    \caption{
        Spectrum of $L$ (and hence also of $K = L^*$) with residuals calculated using \cref{alg:resdmd} in the 
        fractional Sobolev space $H^{-4}$ (left) and in the Hardy--Hilbert space $H^2(\bbA_r)^*$
        with $r=0.755$ (right).  
        In both cases, black: true spectrum of $\left. \scrL \right|_{H^2(\bbA_r)}$, 
        orange: spectrum of the approximating matrix $L$. We have chosen $M = 10000$ and $N=50$ 
        in \cref{alg:resdmd}. Contour lines are 
        \emph{logarithmically scaled}, i.e.~they show the approximated 
        $\epsilon$-pseudospectrum for e.g. 
        $\epsilon = 10^{-p}$ for some $p \in (0,4)$.
        See \cref{sec:another_blaschke} for analysis. 
    }\label{fig:another_blaschke}
\end{figure}

\subsection{Another Blaschke Product}\label{sec:another_blaschke}
We shall next pay attention to another aspect of the performance of the above
algorithms. One of the main purposes of computing pseudospectra or
$\epsilon$-pseudospectra is the detection of spurious eigenvalues in numerical
computations. For the chosen Blaschke map example in \cref{eq:simple_blaschke},
the eigenvalues of the EDMD matrix $K$ computed using a Fourier basis as the
dictionary and equally spaced quadrature nodes are visually indistinguishable
from the eigenvalues of $\mathcal{K}$ when considered on $H^2(\bbA_r)^*$. As our next
example, we shall choose an expanding circle map which is less well-behaved,
resulting in spurious eigenvalues for a finite-size EDMD matrix. Let $S$ be the
circle map given by
\begin{equation}\label{eq:another_blaschke}\setlength\abovedisplayskip{6pt}\setlength\belowdisplayskip{6pt}
    S : \bbT \to \bbT,\quad z \mapsto \left(\frac{z - \mu}{1 - \bar{\mu} z}\right)^2, 
\end{equation}
which is uniformly expanding for $\mu\in \mathbb{D}$ with $|\mu| < \frac{1}{3}$.
The map extends analytically to a suitable open  annulus $\bbA_r$ containing
$\mathbb{T}$ such that the associated transfer operator $\mathcal{L}$ is compact
and its spectrum is given by 
$$\setlength\abovedisplayskip{6pt}\setlength\belowdisplayskip{6pt}
    \sigma \left( \left. \scrL \right|_{H^2(\bbA_r)} \right)
    = \left\{ S'(z^*)^n \mid n \in \bbN_0 \right\} \cup \left\{ \overline{S'(z^*)}^{\,n} \mid n \in \bbN_0 \right\} 
    \cup \{ 0 \},
$$
where $z^*\in \mathbb{D}$ is the unique attracting fixed point of $S$ in
$\mathbb{D}$, see \cite{Slipantschuk}. We use \cref{alg:resdmd} to compute
residuals using the Hardy--Hilbert norm and the Sobolev norm, see
\cref{fig:another_blaschke} (see also \cref{fig:blaschke_contour} for a
different depiction of the right panel of \cref{fig:another_blaschke}). In
both cases, we observe that the computed matrix $K$ has eigenvalues (orange
crosses) of large magnitude, which are not in the spectrum (black circles) of
$\mathcal{K}$ when considered on~$H^2(\bbA_r)^*$. The computed residuals are indeed
indicatively large in the region surrounding these eigenvalues, while being
small near the (accurately identified) first leading eigenvalues of
$\mathcal{K}$. Thus, the method indeed reliably distinguishes spurious
eigenvalues from the ``true'' eigenvalues of~$\mathcal{K}$.

\subsection{Alanine Dipeptide}\label{sec:molecule}

Alanine dipeptide is a standard nontrivial test system for studying conformation dynamics \cite{molecule2}. Conformations correspond to metastable subsets of the configuration space \cite{DDJS98}, and can be identified via eigenvectors associated with real eigenvalues near 1 of a discretized transfer operator \cite{attr}. In contrast, potential-energy-based methods often struggle due to the abundance of local minima \cite{molecule}. It is well known—see, e.g., \cite{RAMACHANDRAN196395}—that the dominant conformations of such molecules are largely governed by two backbone dihedral angles (see \cref{fig:molecule_skeleton}).

\begin{figure}[h]
    \centering
    \begin{subfigure}{0.45\textwidth}
        \centering
        \includegraphics[width=\textwidth]{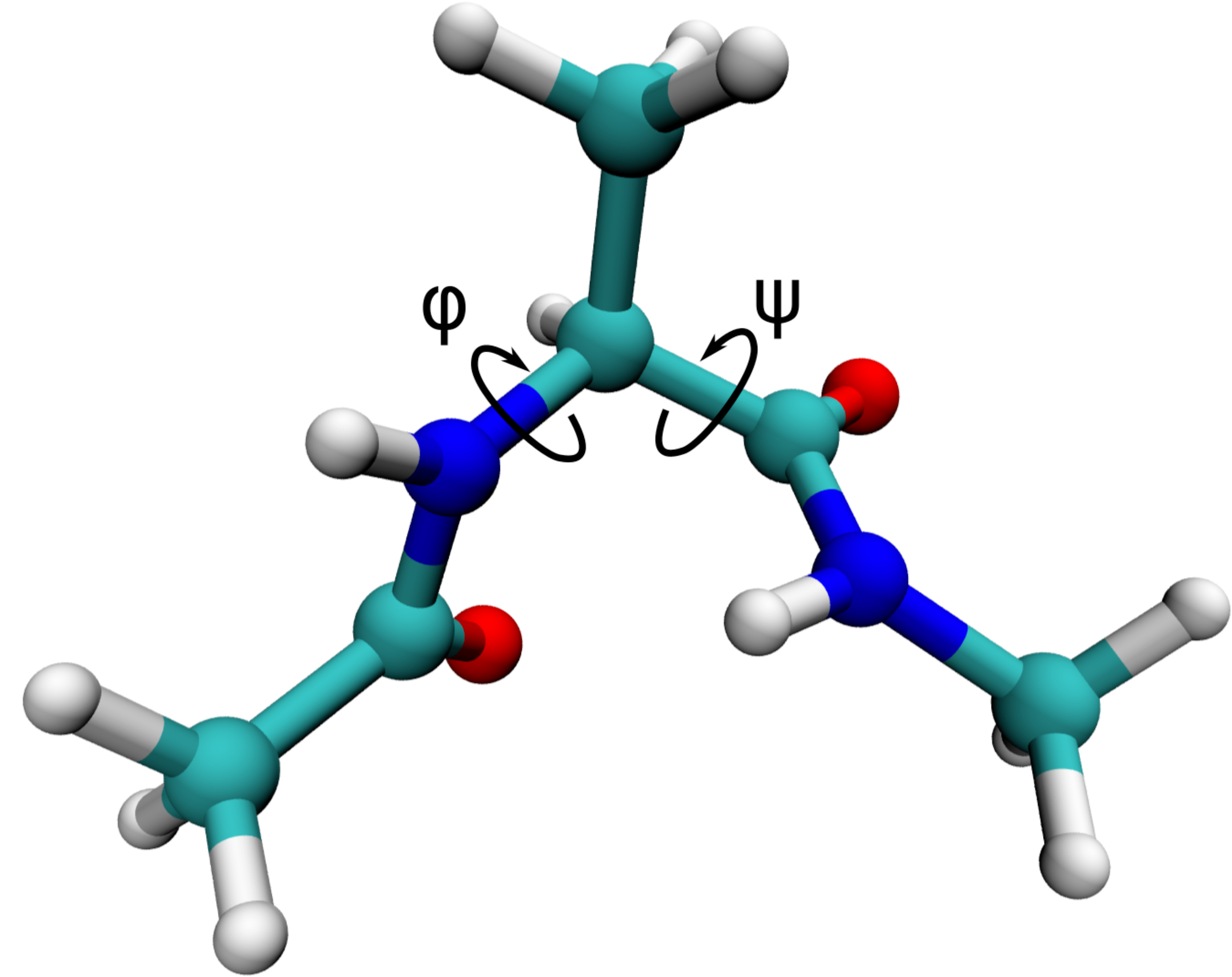}
    \end{subfigure}
    \caption{
        Alanine dipeptide molecule skeleton~\cite{skeleton}. The \emph{dihedral angles} $\varphi$ 
        and $\psi$ are the primary determining factors of the shape and chemical reaction 
        properties of the molecule. 
    }
    \label{fig:molecule_skeleton}
\end{figure}

Among many other approaches \cite{KlKoSch16}, recently, kernel-type methods have
been proposed for a discretization of the transfer
operator~\cite{molecule2,entropic}. Here, we additionally use
\cref{alg:kresdmd_correct} to verify the computed spectrum. We use trajectory
data of the heavy atoms gathered from experiments in \cite{molecule_experiment}.
After subsampling the trajectory data to use just every $50$th time step, we
obtain $M = 2500$ data points in $\bbR^{30}$. We apply
\cref{alg:kresdmd_correct} using the Gaussian kernel $k(w, z) =\exp ( - \| w - z
\|^2 / c^2 )$ with $c = 0.09$, the $2$-norm of the
empirical covariance matrix. The spectrum of $\smash{\widehat{ K }}$ is shown in
\cref{fig:molecule_spectrum}, together with the residuals. The result
indicates that the computed eigenvalues are indeed reliable. 

\begin{figure}[h]
    \centering
    \begin{subfigure}{0.45\textwidth}
        \centering
        \includegraphics[width=\textwidth]{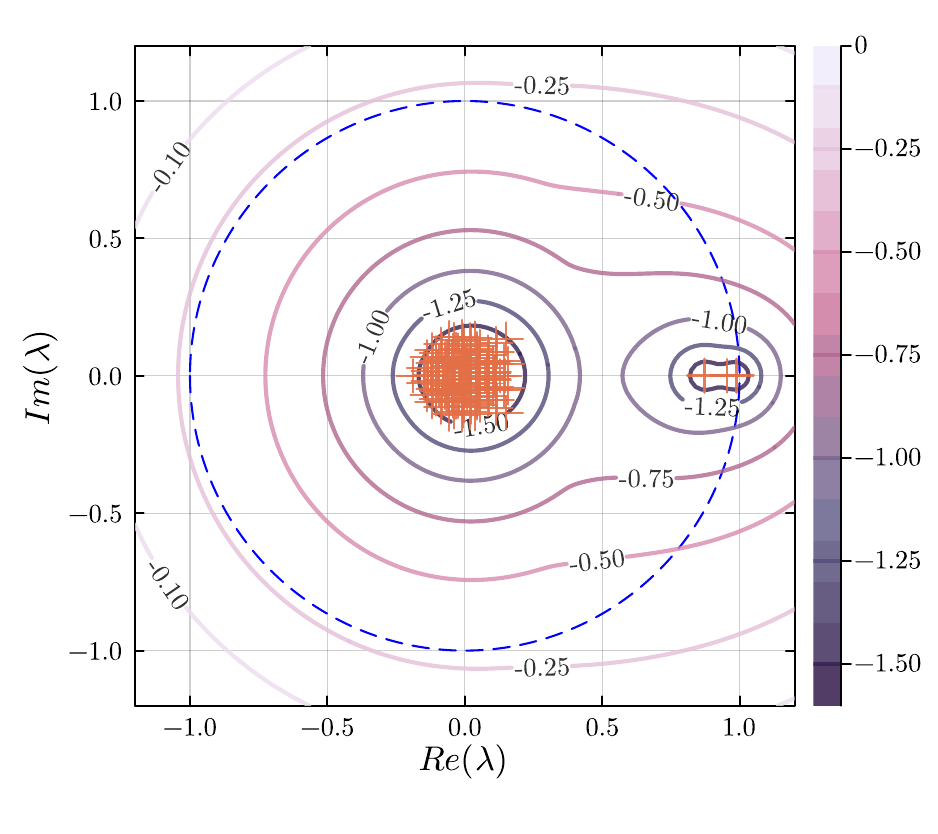}
    \end{subfigure}
    \caption{Spectrum of $\widehat{ K }$ with residuals computed by \cref{alg:kresdmd_correct}. 
        Contour lines are logarithmically scaled.}
    \label{fig:molecule_spectrum}
\end{figure}

For completeness, we also show the eigenfunctions at the two eigenvalues close
to 1 projected onto the two dihedral angles, cf. \cite{molecule2,entropic}, in
\cref{fig:molecule_eigs_kmeans}, left, which are used to detect almost
invariant/metastable sets via kmeans clustering
(\cref{fig:molecule_eigs_kmeans}, right). These figures agree with
previous findings \cite{molecule2}. All of the computations are done 
in the full $30$-dimensional space, and the observables use no a priori information on the dihedral angles.

\begin{figure}[h]
    \centering
    \begin{subfigure}{0.67\textwidth}
        \centering
        \includegraphics[width=\textwidth]{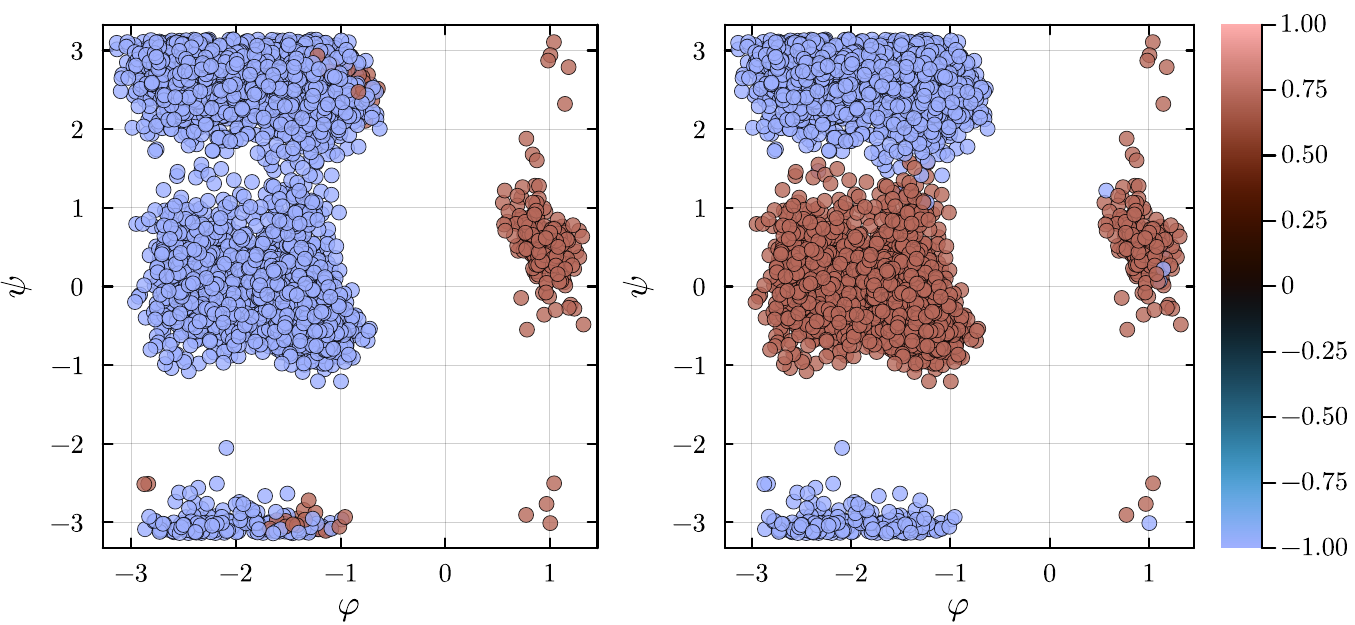}
    \end{subfigure}
    \hfill
    \begin{subfigure}{0.31\textwidth}
        \centering
        \includegraphics[width=\textwidth]{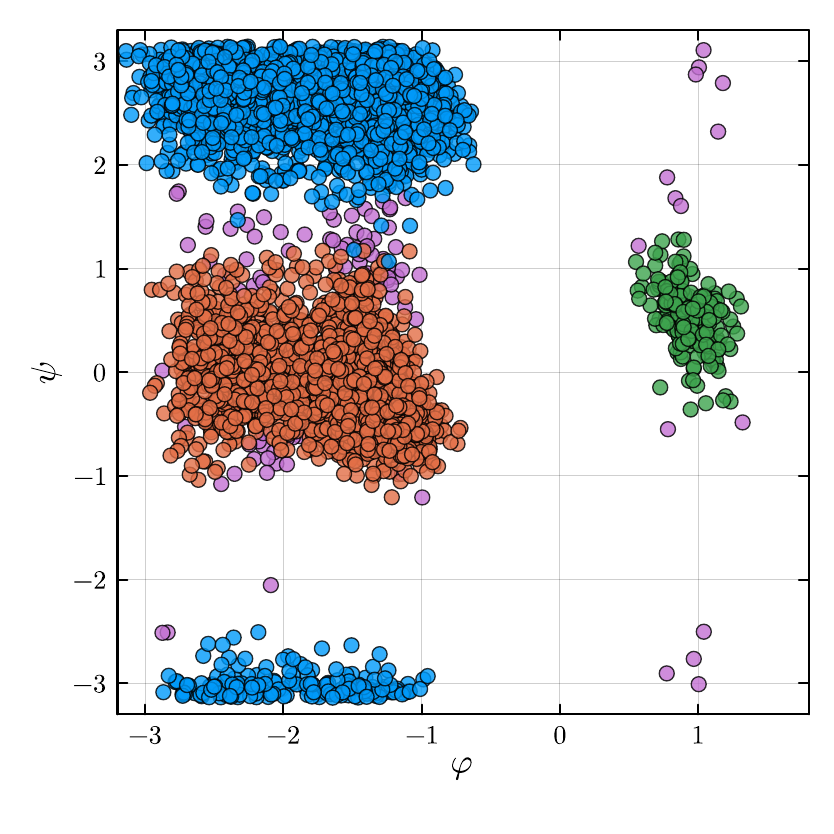}
    \end{subfigure}
    \caption{
        Left: First two nontrivial eigenfunctions of $\widehat{ K }$ for the alanine 
        dipeptide molecule, projected into the space of the two dihedral angles. 
        Right: k-means clustering of the eigenvectors, revealing the conformations, projected into the 
        space of the two dihedral angles.  
    }\label{fig:molecule_eigs_kmeans}
\end{figure}

\clearpage
\small
\bibliographystyle{siamplain}
\bibliography{main.bib}
\normalsize

\appendix

\section{Proof of \cref{lem:kres}}\label{sec:appendixA}

\begin{proof}[Proof of \cref{lem:kres}]
    We assume the snapshots are chosen as a deterministic quadrature scheme. 
    Note that the proof can be performed exactly the same with a random 
    quadrature, where \cref{eq:quadrature_M,eq:gram_M} are chosen to hold with 
    probability $\geq 1 - \delta$ for any fixed $0 < \delta < 1$. 
    We consider a finite approximation of the (in general) infinite Mercer
    dictionary. For $P \in \bbN$ define $\vY = [ \psi_1 \mid \ldots \mid
    \psi_P^{} ]$. Using this dictionary consider 
    $$\setlength\abovedisplayskip{6pt}\setlength\belowdisplayskip{6pt}
        \widecheck{ G } = \tfrac{1}{M} \vYX \vYX^* , \quad 
        \widecheck{ A } = \tfrac{1}{M} \vYY \vYX^* , \quad 
        \widecheck{ J } = \tfrac{1}{M} \vYY \vYY^* , 
    $$
    $$\setlength\abovedisplayskip{6pt}\setlength\belowdisplayskip{6pt}
        \widecheck{ U } ( = \widecheck{ U } (\lambda) ) = 
        \widecheck{ J } - \bar{\lambda} \widecheck{ A } - \lambda \widecheck{ A }^*
        + | \lambda |^2 \widecheck{ G } . 
    $$
    Notice that $\widecheck{ G }_{i j} = \sum_{\ell = 1}^P \mu_\ell^2 \phi_\ell
    (x_i) \overline{\phi_\ell} (x_j)$. Recall $\widehat{G}_{i j} = k ( x_j, x_i )$. 
    The analogous is true for $\widecheck{ A }$ and $\widecheck{ J }$. 

    Let $\epsilon > 0$. As we are concerned with small $\epsilon$ we can wlog
    assume $\epsilon < 1$. Choose $P = P (\epsilon)$ such that 
    \begin{equation}\setlength\abovedisplayskip{6pt}\setlength\belowdisplayskip{6pt}
        \label{eq:P_choice}
        \mu_{P + 1}^2 < \epsilon / 4 (1 + | \lambda |^2), \quad\quad 
        \max_{B \in \{ G, A, J \}}\ \left\| 
            \widetilde{ Q }^* (\widehat{ B } - \widecheck{ B }) \widetilde{ Q } 
        \right\|_{\bbC^M - op}^2 < \epsilon / 16 . 
    \end{equation}
    This is possible since $\mu_P \to 0$ and  $\sum_{\ell = 1}^P \mu_\ell^2
    \phi_\ell (z) \overline{\phi_\ell (w)} \to k(w, z)$ uniformly for all $w, z
    \in \Omega$ as $P \to \infty$ \cite{learning}. 

    Now choose $M = M (\epsilon, P, r)$ such that 
    \begin{equation}\setlength\abovedisplayskip{6pt}\setlength\belowdisplayskip{6pt}
        \label{eq:quadrature_M}
        \max_{\substack{
            \psi \in \spn \vY \\ 
            h \in \spn \tY \\ 
            \| \psi \|_{L^2}^{} = \| h \|_{L^2}^{} = 2
        }}\ 
        \left| 
            \frac{1}{M} \sum_{i = 1}^{M} \overline{\left[ (\scrK - \lambda I) \psi \right] (x_i)} \,h (x_i)
            - \left\langle\ (\scrK - \lambda I) \psi, h\ \right\rangle
        \right|^2
        \,<\, \frac{\epsilon}{4 P} . 
    \end{equation}
    This is possible since $\spn \tY$ and $\spn \vY$ are both
    finite-dimensional. \Cref{eq:quadrature_M} is a condition on the accuracy of the quadrature scheme induced by the data points $x_i$. Now, 
    $$\setlength\abovedisplayskip{6pt}\setlength\belowdisplayskip{6pt}
        \min_{\substack{h \in \spn \tY \\ \| h \|_{L^2}^{} = 1}}\ 
        \sum_{j = 1}^{\infty} \left| 
            \left\langle\ \psi_j, (\scrL - \lambda I) h\ \right\rangle 
        \right|^2
        = \min_{\substack{h \in \spn \tY \\ \| h \|_{L^2}^{} = 1}}\ 
        \sum_{j = 1}^{P} \left| 
            \left\langle\ \psi_j, (\scrL - \lambda I) h\ \right\rangle 
        \right|^2 
        \ + \ R_1
    $$
    where 
    \begin{multline*}\setlength\abovedisplayskip{6pt}\setlength\belowdisplayskip{6pt}
        | R_1 | \leq \max_{\substack{
            h \in \spn \tY \\ 
            \| h \|_{L^2}^{} = 1
        }}\ \sum_{j = P + 1}^{\infty} \left| \left\langle 
            \ \psi_j, (\scrL - \lambda I) h \ 
        \right\rangle \right|^2 \\ 
        \leq \mu_{P + 1}^2 \max_{\substack{
            h \in \spn \tY \\ 
            \| h \|_{L^2}^{} = 1
        }}\ \sum_{j = P + 1}^{\infty} \left| \left\langle 
            \ \phi_j, (\scrL - \lambda I) h \ 
        \right\rangle \right|^2 
        \leq \mu_{P + 1}^2 \left\| \scrL - \lambda I \right\|_{L^2 - op}^2
        < \epsilon / 4
    \end{multline*}
    where the final inequality is due to the triangle inequality since $\| \scrL
    \|_{L^2 - op} = 1$. Now, 
    $$\setlength\abovedisplayskip{6pt}\setlength\belowdisplayskip{6pt}
        \min_{\substack{
            h \in \spn \tY \\ 
            \| h \|_{L^2}^{} = 1
        }}\ 
        \sum_{j = 1}^{P} \left| 
            \left\langle\ \psi_j, (\scrL - \lambda I) h\ \right\rangle 
        \right|^2 
        = \min_{\substack{
            h \in \spn \tY \\ 
            \| h \|_{L^2}^{} = 1
        }}\ 
        \sum_{j = 1}^{P} \left| 
            \frac{1}{M} \sum_{i = 1}^{M} \overline{\left[ (\scrK - \bar{\lambda} I) \psi_j \right] (x_i)}\ h (x_i)
        \right|^2 + R_2
    $$
    where $| R_2 | < \epsilon / 4$ by \cref{eq:quadrature_M}. 
    Moreover, the above equation can be written as the minimum of the quadratic 
    function 
    \begin{equation}\setlength\abovedisplayskip{6pt}\setlength\belowdisplayskip{6pt}
        \label{eq:quadratic_form}    
        u \mapsto \sum_{j = 1}^{P} \left| 
            \frac{1}{M} \sum_{i = 1}^{M} \overline{\left[ (\scrK - \bar{\lambda} I) \psi_j \right] (x_i)}\ [ \tY u ] (x_i)
        \right|^2
    \end{equation}
    over the finite-dimensional space $\bbC^r$, constrained by another quadratic
    function, $u \mapsto u^* \tY^* \tY u$. The former also has a representation
    of the form $u \mapsto u^* \Xi u$ for some symmetric matrix $\Xi \in
    \bbC^{r \times r}$, and so can be solved by computing the smallest eigenvalue of
    $(\tY^* \tY)^{-1} \Xi$. Indeed, this property of quadratically-constrained
    quadratic minimization problems was already used in
    \cref{sec:residual_function} to compute $\res$. Hence, 
    \begin{multline*}\setlength\abovedisplayskip{6pt}\setlength\belowdisplayskip{6pt}
        \min_{\substack{h \in \spn \tY \\ \| h \|_{L^2}^{} = 1}}\ 
        \sum_{j = 1}^{P} \left| 
            \frac{1}{M} \sum_{i = 1}^{M} \overline{\left[ (\scrK - \bar{\lambda} I) \psi_j \right] (x_i)}\ h (x_i)
        \right|^2 \\ 
        = \min_{\substack{u \in \bbC^r \\ u^* \widetilde{ \Sigma }^2 u = 1}}\ 
        \sum_{j = 1}^{P} \left| 
            \frac{1}{M} \sum_{i = 1}^{M} \overline{\left[ (\scrK - \bar{\lambda} I) \psi_j \right] (x_i)}\ [ \tY u ] (x_i)
        \right|^2 + R_3
    \end{multline*}
    where 
    $$\setlength\abovedisplayskip{6pt}\setlength\belowdisplayskip{6pt}
        | R_3 | = \left| 
            \sigma_{\inf} ( (\tY^* \tY)^{-1} \Xi \,) 
            - \sigma_{\inf} ( \widetilde{ \Sigma }^{-2} \Xi \,)
        \right|
    $$
    and $\sigma_{\inf} (R)$ is the smallest eigenvalue of a symmetric 
    matrix $R$. Hence this is 
    $$\setlength\abovedisplayskip{6pt}\setlength\belowdisplayskip{6pt}
        | R_3 | \leq \left\| 
            (\tY^* \tY)^{-1} \Xi - \widetilde{ \Sigma }^{-2} \Xi 
        \right\|
        \leq \left\| (\tY^* \tY)^{-1} - \widetilde{ \Sigma }^{-2} \right\| 
        \left\| \Xi \right\| . 
    $$
    From the formulation of $\Xi$ in  \cref{eq:quadratic_form} it is clear that
    there is a $C > 0$ such that $\| \Xi \| < C$ for all $M$, since the
    quadrature scheme converges by assumption and $P < \infty$. Moreover, 
    by potentially increasing $M$, we can have 
    \begin{equation}\setlength\abovedisplayskip{6pt}\setlength\belowdisplayskip{6pt}
        \label{eq:gram_M}
        \left\| 
            (\tY^* \tY)^{-1} - \widetilde{ \Sigma }^{-2} 
        \right\|_{\bbC^r - op} < \epsilon / 4 C , 
    \end{equation}
    again since the quadrature scheme converges, and $\widetilde{ \Sigma }^2 =
    \widetilde{ \Sigma }^* \widetilde{ Q }^* \widetilde{ Q } \widetilde{ \Sigma
    } = \tYX^* \tYX$. We therefore have $| R_3 | < \epsilon / 4$. Now, 
    $$\setlength\abovedisplayskip{6pt}\setlength\belowdisplayskip{6pt}
        \min_{\substack{u^* \widetilde{ \Sigma }^2 u = 1}}\ 
        \sum_{j = 1}^{P} \left| 
            \frac{1}{M} \sum_{i = 1}^{M} \overline{\left[ (\scrK - \bar{\lambda} I) \psi_j \right] (x_i)}\ [ \tY u ] (x_i)
        \right|^2
        = \min_{\substack{u^* \widetilde{ \Sigma }^2 u = 1}}
        \frac{1}{M} \left\| (\vYY^* - \bar{\lambda} \vYX^*) \tYX u \right\|_{\bbC^M}^2
    $$
    which, by the substitution $w = \widetilde{ \Sigma } u$, 
    $$\setlength\abovedisplayskip{6pt}\setlength\belowdisplayskip{6pt}
        \min_{\substack{u^* \widetilde{ \Sigma }^2 u = 1}}
        \frac{1}{M} \left\| (\vYY^* - \bar{\lambda} \vYX^*) \tYX u \right\|_{\bbC^M}^2
        = \min_{w^* w = 1} w^* \widetilde{ Q }^* \left( 
            \widecheck{ J } 
            - \lambda \widecheck{ A } 
            - \bar{\lambda} \widecheck{ A }^* 
            + | \lambda |^2 \widecheck{ G }
        \right) \widetilde{ Q } w . 
    $$
    Finally, 
    $$\setlength\abovedisplayskip{6pt}\setlength\belowdisplayskip{6pt}
        \min_{w^* w = 1} w^* \widetilde{ Q }^* \left( 
            \widecheck{ J } 
            - \lambda \widecheck{ A } 
            - \bar{\lambda} \widecheck{ A }^* 
            + | \lambda |^2 \widecheck{ G }
        \right) \widetilde{ Q } w 
        = \min_{w^* w = 1} w^* \widetilde{ Q }^* \left( 
            \widehat{ J } 
            - \lambda \widehat{ A } 
            - \bar{\lambda} \widehat{ A }^* 
            + | \lambda |^2 \widehat{ G }
        \right) \widetilde{ Q } w + R_4
    $$
    where $| R_4 | < \epsilon / 4$ by \cref{eq:P_choice} and the same 
    argumentation as for $R_3$. This is precisely 
    $$\setlength\abovedisplayskip{6pt}\setlength\belowdisplayskip{6pt}
        \min_{w^* w = 1} w^* \widetilde{ Q }^* \left( 
            \widehat{ J } 
            - \lambda \widehat{ A } 
            - \bar{\lambda} \widehat{ A }^* 
            + | \lambda |^2 \widehat{ G }
        \right) \widetilde{ Q } w
        = \kres (\lambda; M, r)^2 . 
    $$
    Combining the four error terms now yields the claim. 
\end{proof}

\begin{remark}
    The proof of Lemma \ref{lem:kres} provides explicit error bounds which can
    be used for validated numerics, provided one has control over the quadrature
    error. For this, higher-order or analytically studied quadrature schemes can be used.
    Furthermore, the limit $P \to \infty$ is merely a tool in the proof, and
    does not need to be controlled explicitly, only a choice for $M$ and $r$
    must be made. In this way, the algorithm is still optimal in the sense of
    solvability complexity index; see \cite{benartzi2020}.
\end{remark}

\end{document}